\theoremstyle{plain}
\newtheorem{thm}{Theorem}[section]
\newtheorem{lemm}[thm]{Lemma}
\newtheorem{cor}[thm]{Corollary}
\theoremstyle{definition}
\newtheorem{df}[thm]{Definition}
\newtheorem{rem}[thm]{Remark}
\renewcommand{\div}{\operatorname{div}}
\newcommand{\dB}{\dot{B}}
\newcommand{\supp}{\operatorname{supp}}
\newcommand{\n}[1]{{\left\|#1\right\|}}
\begin{document}
\title[$2$D stationary Navier--Stokes equations on the whole plane]
{Ill-posedness of the two-dimensional stationary Navier--Stokes equations on the whole plane}
\author[M.~Fujii]{Mikihiro Fujii}
\address[M.~Fujii]{Institute of Mathematics for Industry, Kyushu University, Fukuoka 819--0395, Japan}
\email{fujii.mikihiro.096@m.kyushu-u.ac.jp}
\keywords{two-dimensional stationary Navier--Stokes equations, ill-posedness, scaling critical Besov spaces}
\subjclass[2020]{35Q30, 35R25, 42B37, 76D05}
\begin{abstract}
    We consider the two-dimensional stationary Navier--Stokes equations on the whole plane $\mathbb{R}^2$.
    In the higher-dimensional cases $\mathbb{R}^n$ with $n \geqslant 3$, the well-posedness and ill-posedness in scaling critical spaces are well-investigated by numerous papers.
    However, despite the attention of many researchers, the corresponding problem in the two-dimensional whole plane case was a long-standing open problem due to inherent difficulties of two-dimensional analysis.
    The aim of this paper is to address this issue and prove the ill-posedness in the scaling critical Besov spaces based on $L^p(\mathbb{R}^2)$ for all $1 \leqslant p \leqslant2$ in the sense of the discontinuity of the solution map and the non-existence of small solutions.
    To overcome the difficulty, we propose a new method based on the contradictory argument that reduces the problem to the analysis of the corresponding nonstationary Navier--Stokes equations and shows the existence of nonstationary solutions with strange large time behavior, if we suppose to contrary that the stationary problem is well-posed.
\end{abstract}
\maketitle

\section{Introduction}\label{sec:intro}
We consider the incompressible stationary Navier--Stokes equations on $\mathbb{R}^n$ with $n \geqslant 2$:
\begin{align}\label{eq:sNS-0}
    \begin{cases}
    -\Delta U + (U \cdot \nabla ) U + \nabla P = F, \qquad & x \in \mathbb{R}^n,\\
    \div U = 0 , \qquad  & x \in \mathbb{R}^n,
    \end{cases}
\end{align}
where $U=U(x):\mathbb{R}^n \to \mathbb{R}^n$ and $P=P(x):\mathbb{R}^n \to \mathbb{R}$ denote the unknown velocity fields and unknown pressure of the fluid, respectively, whereas $F=F(x):\mathbb{R}^n \to \mathbb{R}^n$ is the given external force.
In the higher-dimensional cases $\mathbb{R}^n$ with $n \geqslant 3$, the well-posedness and ill-posedness in the scaling critical framework are well-investigated (see \cites{Che-93,Kan-Koz-Shi-19,Koz-Yam-95-PJA,Koz-Yam-95-IUMJ,Li-Yu-Zhu,Tsu-19-JMAA,Tsu-19-ARMA,Tsu-19-DIE}).
Although these fundamental problems in two-dimensional case have attracted the attention of many researchers, it has remained unsolved until now because of the difficulties inherent in two-dimensional analysis.
In the present paper, we address this open problem and prove that the stationary Navier--Stokes equations on $\mathbb{R}^2$ is ill-posed in the scaling critical Besov spaces based on $L^p(\mathbb{R}^2)$ for all $1 \leqslant p \leqslant 2$.

Before stating our main result precisely, we reformulate the problem, define the concepts of well-posedness and ill-posedness, and then review the previous studies related to our problem.
Let $\mathbb{P}:= I + \nabla \div (-\Delta)^{-1} = \left\{ \delta_{jk} + \partial_{x_j}\partial_{x_k}(-\Delta)^{-1} \right\}_{1 \leqslant j,k \leqslant n}$
be the Helmholtz projection onto the divergence-free vector fields.
Applying $(-\Delta)^{-1}\mathbb{P}$ to the equation \eqref{eq:sNS-0} and using the facts $\mathbb{P}U=U$, $(U \cdot \nabla)U=\div(U \otimes U)$, and $\mathbb{P}(\nabla P)=0$, we see that \eqref{eq:sNS-0} is formally equivalent to 
\begin{align}\label{eq:sNS-2}
    U = (-\Delta)^{-1}\mathbb{P}F - (-\Delta)^{-1}\mathbb{P}\div (U \otimes U),\qquad x \in \mathbb{R}^n.
\end{align}
For a Banach space $S \subset \mathscr{S}'(\mathbb{R}^n)$,
we say that $U \in S$ is a solution to \eqref{eq:sNS-0} if $U$ satisfies \eqref{eq:sNS-2} in $S$.
Next, we define the notion of well-posedness and ill-posedness.
\begin{df}\label{df-WP}
For two Banach spaces $D,S \subset \mathscr{S}'(\mathbb{R}^n)$,
we say that the equation \eqref{eq:sNS-0} is well-posed from the data space $D$ to the solution space $S$ if the following three statements hold:
\begin{itemize}
    \item [(i)] There exists a positive constant $\delta$ such that for any $F \in B_D(\delta)$, \eqref{eq:sNS-0} possesses a solution $U \in S$,
    \item [(ii)] There exists a positive constant $\varepsilon$ such that the solution of \eqref{eq:sNS-0} is unique in the class $B_S(\varepsilon)$,
    \item [(iii)] The solution map $B_D(\delta) \ni F \mapsto U \in B_S(\varepsilon)$, which is well-defined by (i) and (ii), is continuous, 
\end{itemize}
where we have set $B_D(\delta):=\{ F \in D\ ;\ \| F \|_D < \delta\}$ and $B_S(\varepsilon):=\{ U \in S \ ;\ \| U \|_S < \varepsilon \}$.
If \eqref{eq:sNS-0} is {\it not} well-posed from $D$ to $S$, we say that the equation \eqref{eq:sNS-0} is ill-posed  from $D$ to $S$.
\end{df}
Since the pioneering work \cite{Fuj-Kat-64} by Fujita--Kato, it has been well-known that considering the well-posedness and ill-posedness in the critical function spaces with respect to scaling transforms that keep the equations invariant is crucial.
If $(F,U)$ satisfies \eqref{eq:sNS-2}, then the scaled functions
\begin{align}
    F_{\lambda}(x):= \lambda^3 F( \lambda x),\qquad
    U_{\lambda}(x):= \lambda U(\lambda x)
\end{align}
also solve \eqref{eq:sNS-2} for all $\lambda >0$. We call that the data space $D$ and the solution space $S$ are scaling critical if 
\begin{align}\label{scale-norm}
    \| F_{\lambda} \|_{D} = \| F \|_{D},\qquad
    \| U_{\lambda} \|_{S} = \| U \|_{S}
\end{align}
for all $\lambda >0$.
As the homogeneous Besov spaces $D=\dB_{p,q}^{\frac{n}{p}-3}(\mathbb{R}^n)$ and $S=\dB_{p,q}^{\frac{n}{p}-1}(\mathbb{R}^n)$ ($1 \leqslant p ,q \leqslant \infty$) satisfy \eqref{scale-norm} for all dyadic numbers $\lambda>0$, we regard them as the scaling critical Besov spaces for \eqref{eq:sNS-0}.

Next, we recall known results related to our study.
In the higher-dimensional cases $\mathbb{R}^n$ with $n \geqslant 3$, 
Leray \cite{Ler-33}, Ladyzhenskaya \cite{Lad-59}, and Fujita \cite{Fuj-61} proved the existence of solutions to \eqref{eq:sNS-0}.
For the scaling critical framework, Chen \cite{Che-93} proved the well-posedness of \eqref{eq:sNS-0} from $F=\div \widetilde{F}$ with $\widetilde{F} \in L^{\frac{n}{2}}(\mathbb{R}^n)$ to $U \in L^n(\mathbb{R}^n)$.
Kozono--Yamazaki \cites{Koz-Yam-95-PJA, Koz-Yam-95-IUMJ} considered the well-posedness and stability in the scaling critical Morrey spaces.
Kaneko--Kozono--Shimizu \cite{Kan-Koz-Shi-19} proved that \eqref{eq:sNS-0} is well-posed from $\dB_{p,q}^{\frac{n}{p}-3}(\mathbb{R}^n)$ to $\dB_{p,q}^{\frac{n}{p}-1}(\mathbb{R}^n)$ for all $(p,q) \in [1,n) \times [1, \infty]$, 
whereas
Tsurumi \cites{Tsu-19-JMAA,Tsu-19-ARMA} showed the ill-posedness for $(p,q) \in (\{ n \} \times (2,\infty])\cup((n,\infty] \times [1, \infty])$.
Li--Yu--Zhu \cite{Li-Yu-Zhu} considered the remaining case $(p,q) \in \{n\} \times [1,2]$.
For other related results,
see Tsurumi \cite{Tsu-19-DIE} for the well-posedness in the scaling critical Triebel--Lizorkin spaces, 
Tsurumi \cite{Tsu-19-N} for the well-posedness and ill-posedness in the scaling critical Besov spaces on the periodic box $\mathbb{T}^n$ ($n \geqslant 3$), 
and Cunanan--Okabe--Tsutsui \cite{Cun-Oka-Tsu-22}, Heywood \cite{Hey-70}, and Kozono--Shimizu \cite{Koz-Shi-23} for the asymptotic stability around the stationary flow.

In the two-dimensional case $n=2$, 
the following boundary value problem in exterior domains $\Omega$ with the smooth boundary have been studied extensively.
\begin{align}\label{exterior}
    \begin{dcases}
    -\Delta U + (U \cdot \nabla ) U + \nabla P = F, \qquad & x \in \Omega,\\
    \div U = 0, \qquad & x \in \Omega,\\
    U=0, \qquad & x \in \partial \Omega,\\
    \lim_{|x| \to \infty} U(x)=U_{\infty},
    \end{dcases}
\end{align}
where $U_{\infty} \in \mathbb{R}^2$ is a given constant vector.
The cause of the difference between the two-dimensional case and the higher-dimensional cases follows from the difference of the behavior at $|x| \to \infty$ for the fundamental solution $\Theta=\Theta(x)$ to the Stokes operator $-\mathbb{P}\Delta=-\Delta$ on $\mathbb{R}^n$:
\begin{align}\label{fundamental}
    \Theta(x)=
    \begin{dcases}
        -
        \frac{1}{2\pi}\log|x|,\qquad & (n=2),\\
        \frac{1}{n(n-2)\omega_n}|x|^{-(n-2)},\qquad & (n\geqslant 3),
    \end{dcases}
\end{align}
where $\omega_n$ denotes the volume of the unit ball in $\mathbb{R}^n$.
This is closely related to {\it the Stokes paradox} that the Stokes equations, which is the linearization of \eqref{exterior}, has no solution.
Chang--Finn \cite{Cha-Fin-61} showed the Stokes paradox rigorously. 
The Stokes paradox implies that it is unable to construct solutions of the Navier--Stokes equations as a perturbation \eqref{exterior} from the Stokes flow.
In contrast, Finn--Smith \cite{Fin-Smi-67-1} considered the linearized equation of the perturbed system for \eqref{exterior} around the constant flow $U_{\infty} \in\mathbb{R}^2 \setminus \{ 0 \}$ and showed that the fundamental solution of
the Oseen operator $-\Delta U + (U_{\infty} \cdot \nabla)U + \nabla P$ decays as $|x| \to \infty$ due to the term $(U_{\infty} \cdot \nabla)U$.
Finn--Smith \cite{Fin-Smi-67-2} used this fact and constructed the two-dimensional Navier--Stokes flow on exterior domains around a sufficiently small constant vector $U_{\infty} \in \mathbb{R}^2 \setminus\{0\}$ with no external force. 
This result was improved by many studies; see \cites{Ami-84,Pli-Rus-12,Gal-Soh-95,Yam-16} for instance.
We should note that the problem becomes hard in the case of $U_{\infty}=0$ since the Oseen operator coincides with the Stokes operator in this case. 
Yamazaki \cite{Yam-16} considered this case and proved the existence of a unique small solutions provided that the domain, the external force, and the solution are invariant  under the action of the cyclic group of order 4. 
For other studies on the exterior domain case with $U_{\infty}=0$, see \cites{Hil-Wit-13} for the stationary solutions around the large swirling flow $\mu x^{\perp}/|x|^2$ ($|\mu| \gg 1$) and see \cite{Mae-17} for the asymptotic stability around small swirling flows.
We refer to Galdi \cite{Gal-11} for more detail information of \eqref{exterior}.

In the whole plane case $\mathbb{R}^2$, the previous studies are fewer than for the exterior domain case. 
Indeed, it is more difficult to construct stationary solutions than the exterior domain case since
the singularity at $x=0$ as well as the increase as $|x|\to \infty$ of the fundamental solution $\Theta$ must be controlled. 
Yamazaki \cite{Yam-09} made use of some symmetric structures and constructed small solution. In \cite{Yam-09}, he considered \eqref{exterior} in the whole plane case $\Omega=\mathbb{R}^2$ with $U_{\infty}=0$ and proved that for given external force $F=\nabla^{\perp}G=(\partial_{x_2}G,-\partial_{x_1}G)$, where $G$ decays like $|G(x)|\leqslant \delta(1+|x|)^{-2}$ with some $0<\delta \ll 1$ and possesses the following symmetric conditions:
\begin{align}\label{G-sym}
    G(-x_1,x_2)=G(x_1,-x_2)=G(x_2,x_1)=G(-x_2,x_1)=-G(x_1,x_2),
\end{align}
there exists a unique small solution to \eqref{eq:sNS-0} in the $L^{2,\infty}(\mathbb{R}^2)$-framework with the vorticity $\operatorname{rot}U$ satisfying the same condition as for $G$. 
In related studies, Galdi--Yamazaki \cite{Gal-Yam-15} showed the stability of the above solutions.
See \cite{Mae-Tsu-23} for the stationary solution on $\mathbb{R}^2$ around the small swirling flow $\mu x^{\perp}/|x|^2$ ($|\mu|\ll1$).

Despite of numerous studies on the two-dimensional stationary Navier--Stokes equations, it was a long-standing open problem whether the two-dimensional Navier--Stokes equations on both the exterior domains and the whole plane $\mathbb{R}^2$ possesses a unique small solution for a given small external force $F$ in general settings without any symmetric condition.
In particular, unlike the higher-dimensional cases,
the well-posedness and ill-posedness of stationary Navier--Stokes equations on the whole plane case in the scaling critical framework were completely unsolved.

The aim of this paper is to solve the aforementioned open problem in the challenging case $\mathbb{R}^2$ and prove the ill-posedness of the two-dimensional stationary Navier--Stokes equations
\begin{align}\label{eq:sNS-1}
    \begin{cases}
    -\Delta U + (U \cdot \nabla ) U + \nabla P = F, \qquad & x \in \mathbb{R}^2,\\
    \div U = 0 , \qquad  & x \in \mathbb{R}^2
    \end{cases}
\end{align}
from the scaling critical Besov spaces $\dB_{p,1}^{\frac{2}{p}-3}(\mathbb{R}^2)$ to $\dB_{p,1}^{\frac{2}{p}-1}(\mathbb{R}^2)$ for all $1 \leqslant p \leqslant 2$.
Our main result of this paper now reads as follows.
\begin{thm}[Ill-posedness of \eqref{eq:sNS-1}]\label{thm:main}
For any $1 \leqslant p \leqslant 2$, \eqref{eq:sNS-1} is ill-posed from $\dB_{p,1}^{\frac{2}{p}-3}(\mathbb{R}^2)$ to $\dB_{p,1}^{\frac{2}{p}-1}(\mathbb{R}^2)$ in the sense that the solution map is discontinuous.
More precisely, for any $1 \leqslant p \leqslant 2$, there exist a positive constant $\delta_0=\delta_0(p)$, a positive integer $N_0=N_0(p)$, and a sequence $\{ F_N \}_{N \in \mathbb{N}} \subset \dB_{p,1}^{\frac{2}{p}-3}(\mathbb{R}^2)$ satisfying
\begin{align}
    \lim_{N \to \infty} \| F_N \|_{\dB_{p,1}^{\frac{2}{p}-3}}
    =0
\end{align}
such that
if each $F_N$ with $N \geqslant N_0$ generates a solution $U_N \in \dB_{p,1}^{\frac{2}{p}-1}(\mathbb{R}^2)$ of \eqref{eq:sNS-1}, then it holds 
\begin{align}\label{main:u-0}
    \inf_{N \geqslant N_0}
    \| U_N \|_{\dB_{p,1}^{\frac{2}{p}-1}}
    \geqslant
    \delta_0.
\end{align}
\end{thm}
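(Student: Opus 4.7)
The approach is a proof by contradiction, as indicated in the abstract. Fix $1 \leqslant p \leqslant 2$ and suppose, contrary to Theorem~\ref{thm:main}, that \eqref{eq:sNS-1} is well-posed from $\dB_{p,1}^{\frac{2}{p}-3}$ to $\dB_{p,1}^{\frac{2}{p}-1}$ in the sense of Definition~\ref{df-WP}, with continuous solution map $\mathcal{S}:B_D(\delta)\to B_S(\varepsilon)$. Any small time-independent $F$ then yields a unique small stationary solution $U=\mathcal{S}(F)$, and continuity of $\mathcal{S}$ at the origin forces $\n{U}_{\dB_{p,1}^{\frac{2}{p}-1}}\to 0$ as $\n{F}_{\dB_{p,1}^{\frac{2}{p}-3}}\to 0$. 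The plan is to exhibit a sequence $\{F_N\}$ with $\n{F_N}_{\dB_{p,1}^{\frac{2}{p}-3}}\to 0$ for which the associated $U_N=\mathcal{S}(F_N)$, should they exist, must obey \eqref{main:u-0}, thereby contradicting the assumed continuity.

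The central device is the corresponding Cauchy problem
\begin{equation*}
\partial_t u-\Delta u+(u\cdot\nabla)u+\nabla p=F,\qquad u|_{t=0}=0,\qquad \div u=0,
\end{equation*}
which, for $F$ of small critical norm, admits a global solution $u\in C([0,\infty);\dB_{p,1}^{\frac{2}{p}-1})$ by the standard Kato-type iteration (available in two dimensions for this critical regularity). Under the contradiction hypothesis, the usual stability argument around the small stationary state $\mathcal{S}(F)$ should deliver the convergence $u(t)\to \mathcal{S}(F)$ in $\dB_{p,1}^{\frac{2}{p}-1}$ as $t\to\infty$. Thus any uniform-in-$t$ lower bound on the nonstationary solution at large times propagates to a matching lower bound on the stationary solution, and the task reduces to producing such a lower bound from data with vanishing critical norm.

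To this end I would choose $F_N$ as a small superposition of Littlewood--Paley blocks of a fixed divergence-free profile near dyadic frequency $2^N$, tuned so that $\n{F_N}_{\dB_{p,1}^{\frac{2}{p}-3}}\to 0$. The expected large-time anomaly stems from the two-dimensional logarithmic nature of the Stokes kernel $\Theta$ in \eqref{fundamental}: the quadratic self-interaction of the high-frequency Duhamel evolution $\int_0^t e^{(t-s)\Delta}\mathbb{P}F_N\,ds$ feeds back through $(-\Delta)^{-1}\mathbb{P}\div$ into low Fourier shells where $\Theta$ is logarithmic, generating a low-frequency contribution to the nonstationary solution of order $1$ for large $t$ that does not shrink with $N$. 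This is exactly the mechanism underlying the classical two-dimensional Stokes paradox on exterior domains, now translated from the spatial variable to the time variable, and it is precisely the kind of ``strange large-time behavior'' alluded to in the abstract.

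The principal obstacle is making this mechanism quantitatively rigorous. One needs sharp bilinear estimates for $(-\Delta)^{-1}\mathbb{P}\div(u\otimes u)$ in $\dB_{p,1}^{\frac{2}{p}-1}$ that both close the nonstationary iteration uniformly in $N$ and in $t$, and simultaneously preserve the two-dimensional log-loss coming from the high--high-to-low frequency interaction rather than absorbing it into an over-estimate; only then can one extract a genuine lower bound $\n{u_N(t)}_{\dB_{p,1}^{\frac{2}{p}-1}}\geqslant \delta_0$ valid for all large $t$. The endpoint $p=2$ is likely the most delicate, possibly requiring a finer paraproduct splitting than the range $1\leqslant p<2$. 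Once this quantitative two-dimensional obstruction is pinned down, combining it with the nonstationary-to-stationary convergence from the second paragraph yields $\n{U_N}_{\dB_{p,1}^{\frac{2}{p}-1}}\geqslant \delta_0$, which is \eqref{main:u-0} and contradicts the assumed continuity of $\mathcal{S}$ at the origin.
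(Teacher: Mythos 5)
Your high-level strategy --- reduce to the forced nonstationary Navier--Stokes equations, show those behave anomalously at large times, and turn that into a contradiction with well-posedness of the stationary problem --- is exactly the paper's strategy. But there is a decisive gap in your second paragraph that undermines the logical structure of the argument.

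You assert that for $F$ of small critical norm the Cauchy problem ``admits a global solution $u\in C([0,\infty);\dB_{p,1}^{\frac{2}{p}-1})$ by the standard Kato-type iteration (available in two dimensions for this critical regularity).'' This is false, and its failure is precisely the heart of the matter. With a time-independent forcing the Duhamel term $\int_0^t e^{(t-s)\Delta}\mathbb{P}F\,ds = (-\Delta)^{-1}(1-e^{t\Delta})\mathbb{P}F$ does not decay, so one is forced to close the iteration in a space like $\widetilde{L^{\infty}}(0,\infty;\dB_{p,1}^{\frac{2}{p}-1})$, and the required bilinear estimate simply fails there in two dimensions (Subsection~\ref{subseq:2} makes this explicit). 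In the paper the unconditional global existence step is replaced by two weaker statements that are each separately provable: Theorem~\ref{thm:ill} gives a solution $u_N$ only on a long but finite interval $[0,T_N]$ with $T_N=2^{2N}\to\infty$ and a quantitative lower bound $\|u_N(T_N)\|\gtrsim \delta^2$; and Theorem~\ref{thm:stability} gives a global bounded solution $\widetilde{u}_N$ \emph{conditionally}, i.e.\ under the hypothesis that the stationary solution $U_N$ exists, by perturbing around $U_N$. The contradiction then comes from identifying $u_N$ and $\widetilde{u}_N$ on $[0,T_N]$ via the weak--strong uniqueness estimate of Lemma~\ref{lemm:prod-2} and comparing $\|u_N(T_N)\|\gtrsim\delta^2$ with $\|\widetilde{u}_N\|_{L^\infty_t}\lesssim\|U_N\|$.

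Two further issues. First, you invoke ``the usual stability argument'' to claim $u(t)\to \mathcal{S}(F)$ as $t\to\infty$; that convergence is neither established nor needed. What Theorem~\ref{thm:stability} delivers (and what suffices) is the time-uniform bound $\sup_t\|\widetilde{u}_N(t)\|\le K_2\|U_N\|$, not asymptotic convergence. Second, your description of the lower-bound mechanism (high-high-to-low feedback through $(-\Delta)^{-1}\mathbb{P}\div$ and the logarithmic Stokes kernel) is the right heuristic, but the quantitative version requires a very specific construction: $F_N=(\delta/\sqrt{N})\nabla^{\perp}(\Psi\cos(Mx_1))$ with the $1/\sqrt N$ normalisation, $T_N=2^{2N}$, and a Fourier-side computation of the second Picard iterate showing that the $\ell^1$ sum in $\dB_{p,1}$ accumulates $\sim N$ low-frequency shells each of size $\sim\delta^2/N$. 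Simultaneously, one must close the remainder estimate uniformly in $N$ on $[0,T_N]$; the auxiliary space $\widetilde{L^N}(0,T_N;\dB_{p,2}^{\frac{2}{p}-1+\frac{2}{N}})$ with time-exponent equal to $N$ is chosen precisely so that $T_N^{1/N}=4$ is uniformly bounded, and the Hausdorff--Young constants in Lemma~\ref{lemm:ill-duhamel-est} are tracked in $N$. None of these choices is optional; without them the log-loss is either invisible or swamped by the remainder.
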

\begin{rem}
We provide some remarks on Theorem \ref{thm:main}.
\begin{enumerate}
    \item 
    In the context of ill-posedness, the narrower function spaces framework, the stronger the result.
    Besov spaces with the interpolation index $q=1$ enable us to handle narrower space than Lebesgue or Sobolev spaces.
    Indeed, Theorem \ref{thm:main} includes the narrowest scaling critical Besov spaces framework from $\dB_{1,1}^{-1}(\mathbb{R}^2)$ to $\dB_{1,1}^1(\mathbb{R}^2)$,
    which are included in all scaling critical Lebesgue, Sobolev, and Besov spaces. 
    Moreover, the scaling critical Besov spaces $\dB_{p,1}^{\frac{2}{p}-1}(\mathbb{R}^2)$ ($1 \leqslant p \leqslant 2$) ensures the unconditional uniqueness for the nonstationary Navier--Stokes equations \eqref{eq:nNS-4} below, which plays a key role in the proof of Theorem \ref{thm:main}. See the outline of the proof below and Section \ref{sec:pf} for details.
    These are reasons why we use Besov spaces.
    \item 
    Theorem \ref{thm:main} can be compared with the result of Yamazaki \cite{Yam-09}, 
    where he constructed a unique small solution to \eqref{eq:sNS-1} in the scaling critical space $L^{2,\infty}(\mathbb{R}^2)$, which is a wider framework than ours, that is $\dB_{p,1}^{\frac{2}{p}-1}(\mathbb{R}^2) \hookrightarrow L^{2,\infty}(\mathbb{R}^2)$ ($1 \leqslant p \leqslant 2$).
    In \cite{Yam-09}, it is assumed that the small external force has the form $F=\nabla^{\perp}G$ with some function $G$ satisfying the symmetric condition \eqref{G-sym}, while our sequence of external forces in Theorem \ref{thm:main} is given by an anisotropic form as follows:
    \begin{align}\label{rem:force}
        F_N(x):= \frac{\delta}{\sqrt{N}}\nabla^{\perp}(\Psi(x)\cos(Mx_1)),
    \end{align}
    for some constants $0<\delta\ll 1$, $M \gg 1$, and some real valued radial symmetric function $\Psi \in \mathscr{S}(\mathbb{R}^2)$.
    Therefore, it is revealed that the symmetric condition \eqref{G-sym} is a crucial assumption for the solvability of \eqref{eq:sNS-1}. 
    \item
    In the higher-dimensional whole space $\mathbb{R}^n$ and periodic box $\mathbb{T}^n$ cases with $n \geqslant 3$, 
    it was shown in \cites{Kan-Koz-Shi-19,Tsu-19-N} that \eqref{eq:sNS-0} is well-posed in the scaling critical Besov spaces based on $L^p(\mathbb{R}^n)$ for $1 \leqslant p < n$.
    Tsurumi \cite{Tsu-23} revealed that similar results hold for the two-dimensional stationary Navier--Stokes equations on the periodic box $\mathbb{T}^2$. 
    In \cite{Tsu-23}, he showed the well-posed in the {nearly} scaling critical Besov spaces based on $L^{p+\varepsilon}(\mathbb{T}^2)$ for $1 \leqslant p < 2$ with small $\varepsilon>0$.
    By comparing these results and Theorem \ref{thm:main}, we see that, unlike the higher-dimensional cases, the solvability is different in the two-dimensional case when the domain is the periodic box $\mathbb{T}^2$ and the whole plane $\mathbb{R}^2$.
    This implies that in the two-dimensional case, information at the spatial infinity of \eqref{eq:sNS-1} affects the solvability of \eqref{eq:sNS-1}, which may be attributed to the fact that the fundamental solution $\Theta(x)$ of the two-dimensional Stokes equations increases logarithmically (see \eqref{fundamental}).
\end{enumerate}
\end{rem}
Since uniqueness is not guaranteed, there may be several solution sequences  for a fixed sequence  $\{ F_N\}_{N\in \mathbb{N}}$ of external forces.
Theorem \ref{thm:main} claims that there exists no solution to \eqref{eq:sNS-1} in $\dB_{p,1}^{\frac{2}{p}-1}(\mathbb{R}^2)$ for some $F_{N_0}$, or {\it all} sequences of solutions are bounded from below by a positive constant $\delta_0$, which is independent of the choice of solution sequences.
This implies the non-existence of small solutions for some small external forces.
More precisely, Theorem \ref{thm:main} immediately leads the following corollary.
\begin{cor}[Non-existence of small solutions to \eqref{eq:sNS-1}]\label{cor:main}
    For any $1 \leqslant p \leqslant 2$, there exist two positive constants $\delta_0=\delta_0(p)$ and  $\varepsilon_0=\varepsilon_0(p)$ such that 
    for any $0 < \varepsilon \leqslant \varepsilon_0$, there exists a external force $F^{\varepsilon} \in \dB_{p,1}^{\frac{2}{p}-3}(\mathbb{R}^2)$ satisfying $\| F^{\varepsilon} \|_{\dB_{p,1}^{\frac{2}{p}-3}}< \varepsilon$ such that 
    \eqref{eq:sNS-1} with the external force $F^{\varepsilon}$ possesses no solution in the class
    \begin{align}
        \left\{
        U \in \dB_{p,1}^{\frac{2}{p}-1}(\mathbb{R}^2) \ ;\ 
        \| U \|_{\dB_{p,1}^{\frac{2}{p}-1}} < \delta_0
        \right\}.
    \end{align}
\end{cor}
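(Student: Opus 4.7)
The plan is to deduce the corollary as an immediate reformulation of Theorem~\ref{thm:main}.

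Fix $p \in [1,2]$, and let $\delta_0 = \delta_0(p)$, $N_0 = N_0(p)$, and $\{F_N\}_{N \in \mathbb{N}} \subset \dB_{p,1}^{\frac{2}{p}-3}(\mathbb{R}^2)$ be as given by Theorem~\ref{thm:main}. I take $\varepsilon_0 > 0$ to be any convenient constant (for concreteness, $\varepsilon_0 := 1$). For each $\varepsilon \in (0,\varepsilon_0]$, the decay $\| F_N \|_{\dB_{p,1}^{\frac{2}{p}-3}} \to 0$ allows me to select an index $N(\varepsilon) \geqslant N_0$ satisfying $\| F_{N(\varepsilon)} \|_{\dB_{p,1}^{\frac{2}{p}-3}} < \varepsilon$, and I set $F^\varepsilon := F_{N(\varepsilon)}$.

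The non-existence claim follows by contradiction. Suppose that $U^\varepsilon \in \dB_{p,1}^{\frac{2}{p}-1}(\mathbb{R}^2)$ is a solution of \eqref{eq:sNS-1} with external force $F = F^\varepsilon$ satisfying $\| U^\varepsilon \|_{\dB_{p,1}^{\frac{2}{p}-1}} < \delta_0$. Taking $U^\varepsilon$ as the $N(\varepsilon)$-th entry of a family of solutions $\{U_N\}_{N \geqslant N_0}$ to \eqref{eq:sNS-1} for the sequence $\{F_N\}_{N \geqslant N_0}$ (with the remaining entries being any admissible choice) then yields $\inf_{N \geqslant N_0} \| U_N \|_{\dB_{p,1}^{\frac{2}{p}-1}} \leqslant \| U^\varepsilon \|_{\dB_{p,1}^{\frac{2}{p}-1}} < \delta_0$, contradicting the conclusion $\inf_{N \geqslant N_0} \| U_N \|_{\dB_{p,1}^{\frac{2}{p}-1}} \geqslant \delta_0$ of Theorem~\ref{thm:main}.

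No genuine obstacle is expected: the corollary is essentially a per-index unpacking of the infimum lower bound of Theorem~\ref{thm:main}, combined with the freedom to shift the index $N$ along the sequence $\{F_N\}$, whose norms decay to zero, to produce external forces of arbitrarily small size.
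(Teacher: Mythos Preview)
The paper gives no separate proof of the corollary, stating only that it follows immediately from Theorem~\ref{thm:main}; your argument is precisely the intended reduction.

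One small logical wrinkle: when you complete $\{U_N\}_{N\geqslant N_0}$ by ``any admissible choice,'' you implicitly assume that every $F_N$ with $N\geqslant N_0$ has at least one solution in $\dB_{p,1}^{\frac{2}{p}-1}(\mathbb{R}^2)$. If some $F_{N''}$ has none, then no full family $\{U_N\}_{N\geqslant N_0}$ of solutions exists, the hypothesis of Theorem~\ref{thm:main} is vacuous, and no contradiction follows from its statement alone. This is harmless in context, because the paper's proof of Theorem~\ref{thm:main} (Section~\ref{sec:pf}) actually establishes the stronger per-index assertion: for each fixed $N'\geqslant N_0$, \eqref{eq:sNS-1} with force $F_{N'}$ admits no solution $U_{N'}$ with $\|U_{N'}\|_{\dB_{p,1}^{\frac{2}{p}-1}}<\delta_0$. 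With that reading, your choice $F^\varepsilon:=F_{N(\varepsilon)}$ works directly and no sequence-assembly is needed.
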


We elaborate upon the difficulty that we meet when we prove Theorem \ref{thm:main}.
Following the standard ill-posedness argument as proposed in \cites{Bou-Pav-08,Tsu-19-JMAA,Yon-10},
we may construct a sequence $\{F_N\}_{N\in \mathbb{N}} \subset \dB_{p,q}^{\frac{2}{p}-3}(\mathbb{R}^2)$ of the external force satisfying 
\begin{align}
    \lim_{N\to\infty} \| F_N\|_{\dB_{p,q}^{\frac{2}{p}-3}} = 0, \qquad
    \lim_{N\to\infty} \left\| U_N^{(1)} \right\|_{\dB_{p,q}^{\frac{2}{p}-1}} = 0, \qquad 
    \liminf_{N \to \infty} \left\| U_N^{(2)} \right\|_{\dB_{p,q}^{\frac{2}{p}-1}} > 0,
\end{align}
where $U_N^{(1)}$ and $U_N^{(2)}$ are the first and second iterations, respectively, defined as 
\begin{align}
    U_N^{(1)}:= (-\Delta)^{-1}\mathbb{P}F_N,\qquad
    U_N^{(2)}
    := 
    -
    (-\Delta)^{-1}
    \mathbb{P} 
    \div 
    ( U_N^{(1)} \otimes U_N^{(1)} ) .
\end{align}
We formally decompose the corresponding solution $U_N$ of \eqref{eq:sNS-0} with the external force $F_N$ as 
\begin{align}
    U_N = U_N^{(1)} + U_N^{(2)} + W_N,
\end{align}
where the perturbation $W_N$ is a solution to 
\begin{align}\label{eq:rem}
    \begin{split}
    -\Delta W_N
    &
    +\mathbb{P}\div
    \left( 
    U_N^{(1)} \otimes U_N^{(2)} 
    +
    U_N^{(2)} \otimes U_N^{(1)}
    +
    U_N^{(2)} \otimes U_N^{(2)}\right.\\
    &
    \left.
    +
    U_N^{(1)} \otimes W_N 
    +
    U_N^{(2)} \otimes W_N
    +
    W_N \otimes U_N^{(1)} 
    +
    W_N \otimes U_N^{(2)}
    +
    W_N \otimes W_N
    \right)=0.
    \end{split}
\end{align}
However, 
in the whole plane case $\mathbb{R}^2$, 
it seems hard to find a function space $X \subset \mathscr{S}'(\mathbb{R}^2)$ in which the following nonlinear estimate holds:
\begin{align}\label{nonlin}
    \left\| (-\Delta)^{-1}\mathbb{P}\div(U \otimes V) \right\|_X
    \leqslant
    C
    \| U \|_X
    \| V \|_X.
\end{align}
In particular, the author \cite{Fujii-pre} implied that \eqref{nonlin} fails for all scaling critical Besov spaces $X=\dB_{p,q}^{\frac{2}{p}-1}(\mathbb{R}^2)$ ($1 \leqslant p,q \leqslant \infty$).
Thus, it seems difficult to construct a function $W_N$ obeying \eqref{eq:rem} and establish its suitable estimate.
Consequently it is hard to prove the desired ill-posedness by the standard argument.

Let us mention the idea to overcome the aforementioned difficulties and prove Theorem \ref{thm:main}.
Inspired by the general observation that the stationary solutions should be the large time behavior of {\it nonstationary} solutions, 
we consider the {\it nonstationary} Navier--Stokes equations.
Then, in contrast to the stationary problem, which possesses difficulties in the singularity of $(-\Delta)^{-1}$ at the origin in the frequency side,
we see that, for the nonstationary Navier--Stokes equations, the heat kernel $\{ e^{t\Delta} \}_{t>0}$ relaxes the singularity on the low-frequency part, and we may obtain the nonlinear estimate
\begin{align}
    \left\| \int_0^t e^{(t-\tau)\Delta}\mathbb{P}\div( u(\tau) \otimes v(\tau) )d\tau \right\|_{X}
    \leqslant
    C
    \| u \|_{X}
    \| v \|_{X}
\end{align}
with $X=\widetilde{L^r}(0,T; \dB_{p,q}^{\frac{2}{p}-1+\frac{2}{r}}(\mathbb{R}^2))$ for some $p,q,r$ and all $0<T \leqslant \infty$. 
See Lemma \ref{lemm:duhamel-est-1} below for details.
Motivated by these facts, we suppose to contrary that \eqref{eq:sNS-1} is well-posed and consider the {\it nonstationary} Navier--Stokes equations with the stationary external forces.
Then, we may show that a contradiction appears from the behavior of the {\it nonstationary} solutions in large times.

Based on the above considerations, we provide the outline of the proof of Theorem \ref{thm:main}. 
Let $\{ F_N \}_{N\in \mathbb{N}}$ be  the external forces defined by \eqref{rem:force}; then it holds that
\begin{align}\label{introFN0}
    \lim_{N\to \infty} \| F_N \|_{\dB_{p,1}^{\frac{2}{p}-3}} = 0.
\end{align}
We consider the nonstationary flow obeying
\begin{align}\label{eq:nNS-4}
    \begin{dcases}
    \partial_t u - \Delta u + \mathbb{P}\div (u \otimes u) =\mathbb{P}F_N, \qquad & t>0, x \in \mathbb{R}^2,\\
    \div u = 0, \qquad & t \geqslant 0, x \in \mathbb{R}^2,\\
    u(0,x)=0, \qquad & x \in \mathbb{R}^2.
    \end{dcases}
\end{align}
By Theorem \ref{thm:ill} below, we may prove the {\it global ill-posedness} of \eqref{eq:nNS-4}; namely there exists a sequence $\{ u_N \}_{N\in \mathbb{N}}$ of solutions to \eqref{eq:nNS-4} on some long time interval $[0,T_N]$ with $T_N \to \infty$ as $N\to \infty$ satisfying 
\begin{align}\label{outline-2}
    \liminf_{N\to \infty} \| u_N(T_N) \|_{\dB_{p,1}^{\frac{2}{p}-1}} \geqslant c
\end{align}
for some positive constant $c$.  
This phenomenon is inherent to two-dimensional flows (see Remark \ref{rem:2D} for details).
Here, we suppose to contrary that \eqref{eq:sNS-1} is well-posed.
Then, we see by \eqref{introFN0} that for sufficiently large $N$, $F_N$ generates a solution $U_N$ to \eqref{eq:sNS-1} satisfying 
\begin{align}\label{U0}
    \lim_{N\to \infty}\| U_N \|_{\dB_{p,1}^{\frac{2}{p}-1}} = 0.
\end{align}
Theorem \ref{thm:stability} below shows that the perturbed equation \eqref{eq:nNS-2} below for $v:=u-U_N$ is globally-in-time solvable, and we obtain a solution $\widetilde{u}_N$ to \eqref{eq:nNS-4} satisfying 
\begin{align}\label{outline-1}
    \sup_{t>0}\| \widetilde{u}_N(t) \|_{\dB_{p,1}^{\frac{2}{p}-1}} \leqslant C \| U_N \|_{\dB_{p,1}^{\frac{2}{p}-1}},
\end{align}
where $C$ is a positive constant independent of $N$.
Then, since the standard uniqueness argument implies that $\widetilde{u}_N(t)=u_N(t)$ holds for all $0 \leqslant t \leqslant T_N$, we see by \eqref{outline-2} and \eqref{outline-1} that
\begin{align}
    0<
    \frac{c}{2}
    \leqslant 
    \| u_N(T_N) \|_{\dB_{p,1}^{\frac{2}{p}-1}}
    =
    \| \widetilde{u}_N (T_N) \|_{\dB_{p,1}^{\frac{2}{p}-1}} 
    \leqslant C \| U_N  \|_{\dB_{p,1}^{\frac{2}{p}-1}}.
\end{align}
for sufficiently large $N$.
Then, letting $N\to \infty$ in the above estimate, we meet a contradiction to \eqref{U0}, which completes the outline of the proof.

This paper is organized as follows.
In Section \ref{sec:pre}, we state the definitions of several function spaces used in this paper and prepare certain key estimates for our analysis.
In Section \ref{sec:nNS}, we focus on the nonstationary Navier--Stokes equations with given stationary external forces and prove that the nonstationary problem is globally ill-posed.
We also show its the global well-posedness under the assumption that the corresponding stationary solution exists if we assume that stationary solutions exist.
Using the results obtained in Section \ref{sec:nNS}, we prove Theorem \ref{thm:main} in Section \ref{sec:pf}.

Throughout this paper, we denote by $C$ and $c$ the constants, which may differ in each line. In particular, $C=C(*,...,*)$ denotes the constant which depends only on the quantities appearing in parentheses. 
Furthermore, we use lowercase for functions with the time and space variables and uppercase for functions that do not depend on the time variable but only on the space variables.

\section{Preliminaries}\label{sec:pre}
In this section, we introduce several function spaces and prepare lemmas, which are to be used in this paper.
Let $\mathscr{S}(\mathbb{R}^2)$ be the set of all Schwartz functions on $\mathbb{R}^2$ and $\mathscr{S}'(\mathbb{R}^2)$ represents the 
set of all tempered distributions on $\mathbb{R}^2$.
We use $L^p(\mathbb{R}^2)$ ($1 \leqslant p \leqslant \infty$) to denote the standard Lebesgue spaces on $\mathbb{R}^2$.
For $F \in \mathscr{S}(\mathbb{R}^2)$, the Fourier transform and inverse Fourier transform of $F$ are defined as
\begin{align}
    \mathscr{F}[F](\xi)=\widehat{F}(\xi):=\int_{\mathbb{R}^2} e^{-ix\cdot\xi} F(x)dx,\qquad
    \mathscr{F}^{-1}[F](x):=\frac{1}{(2\pi)^2}\int_{\mathbb{R}^2} e^{ix\cdot\xi} F(\xi)d\xi.
\end{align}

Let $\{\Phi_j\}_{j \in \mathbb{Z}} \subset \mathscr{S}(\mathbb{R}^2)$ be a dyadic partition of unity satisfying 
\begin{align}
    0 \leqslant \widehat{\Phi_0}(\xi) \leqslant 1,\qquad
    \supp \widehat{\Phi_0} \subset \{ \xi \in \mathbb{R}^2\ ;\ 2^{-1} \leqslant |\xi| \leqslant 2 \},\qquad
    \widehat{\Phi_j}(\xi) = \widehat{\Phi_0}(2^{-j}\xi)
\end{align}
and
\begin{align}
    \sum_{j \in \mathbb{Z}}
    \widehat{\Phi_j}(\xi)
    =1,
    \qquad
    \xi \in \mathbb{R}^2 \setminus \{ 0 \}.
\end{align}
Using this partition of unity, 
we define the Littlewood-Paley dyadic frequency localized operators $\{ \Delta_j \}_{j \in \mathbb{Z}}$ by
$\Delta_j F := \mathscr{F}^{-1}\left[\widehat{\Phi_j} \widehat{F}\right]$
for $j \in \mathbb{Z}$ and $F \in \mathscr{S}'(\mathbb{R}^2)$.
We define the homogeneous Besov spaces $\dB_{p,q}^s(\mathbb{R}^2)$ ($1 \leqslant p,q \leqslant \infty$, $s \in \mathbb{R}$) by 
\begin{align}
    \dB_{p,q}^s(\mathbb{R}^2)
    :={}&
    \left\{
    F \in \mathscr{S}'(\mathbb{R}^2) / \mathscr{P}(\mathbb{R}^2)
    \ ; \ 
    \| F \|_{\dB_{p,q}^s}
    <
    \infty
    \right\},\\
    \| F \|_{\dB_{p,q}^s}
    :={}&
    \left\|
    \left\{
    2^{sj}
    \| \Delta_j F \|_{L^p}
    \right\}_{j \in \mathbb{Z}}
    \right\|_{\ell^{q}},
\end{align}
where $\mathscr{P}(\mathbb{R}^2)$ denotes the set of all polynomials on $\mathbb{R}^2$.
It is well-known that if $s <2/p$ or $(s,q)=(2/p,1)$, then $\dB_{p,q}^s(\mathbb{R}^2)$ is identified as 
\begin{align}\label{chara-Besov}
    \dB_{p,q}^s(\mathbb{R}^2)
    \sim 
    \left\{
    F \in \mathscr{S}'(\mathbb{R}^2)\ ;\ 
    F = \sum_{j \in \mathbb{Z}} \Delta_j F
    \quad {\rm in\ }\mathscr{S}'(\mathbb{R}^2),\quad
    \| F \|_{\dB_{p,q}^s}
    <
    \infty
    \right\}.
\end{align}
See \cite{Saw-18}*{Theorem 2.31} for the proof of \eqref{chara-Besov}.
We refer to \cite{Saw-18} for the basic properties of Besov spaces.

To deal with space-time functions, we use the Chemin--Lerner spaces 
$\widetilde{L^r}(I ; \dB_{p,q}^s(\mathbb{R}^2))$ defined by
\begin{align}
    \widetilde{L^r}(I ; \dB_{p,q}^s(\mathbb{R}^2))
    :={}&
    \left\{
    f : I \to  \mathscr{S}'(\mathbb{R}^2) / \mathscr{P}(\mathbb{R}^2)
    \ ; \ 
    \| f \|_{\widetilde{L^r}(I;\dB_{p,q}^s)}
    <
    \infty
    \right\},\\
    \| f \|_{\widetilde{L^r}(I;\dB_{p,q}^s)}
    :={}&
    \left\|
    \left\{
    2^{sj}
    \| \Delta_j f \|_{L^r(I;L^p)}
    \right\}_{j \in \mathbb{Z}}
    \right\|_{\ell^{q}}
\end{align}
for all  $1 \leqslant p,q,r \leqslant \infty$, $s \in \mathbb{R}$, and intervals $I \subset \mathbb{R}$.
We also use the following notation
\begin{align}
    \widetilde{C}(I ; \dB_{p,q}^s(\mathbb{R}^2))
    :=
    C(I ; \dB_{p,q}^s(\mathbb{R}^2))
    \cap
    \widetilde{L^{\infty}}(I ; \dB_{p,q}^s(\mathbb{R}^2)).
\end{align}
The Chemin--Lerner spaces were first introduced by \cite{Che-Ler-95} and continue to be frequently used for the analysis of compressible viscous fluids in critical Besov spaces.
The Chemin--Lerner spaces possess similar embedding properties as that for usual Besov spaces:
\begin{itemize}
    \item []
        $\widetilde{L^r}(I; \dB_{p,q_1}^s(\mathbb{R}^2)) 
        \hookrightarrow
        \widetilde{L^r}(I; \dB_{p,q_2}^s(\mathbb{R}^2))$ 
        for $1 \leqslant q_1 \leqslant q_2 \leqslant \infty$,
    \item []
        $\widetilde{L^r}(I; \dB_{p_1,q}^{s+\frac{2}{p_1}}(\mathbb{R}^2)) 
        \hookrightarrow
        \widetilde{L^r}(I; \dB_{p_2,q}^{s+\frac{2}{p_2}}(\mathbb{R}^2))$ 
        for $1 \leqslant p_1 \leqslant p_2 \leqslant \infty$.
\end{itemize}
It also holds by the Hausdorff--Young inequality that
\begin{align}
    &
    \widetilde{L^r}(I; \dB_{p,q}^s(\mathbb{R}^2))
    \hookrightarrow
    {L^r}(I; \dB_{p,q}^s(\mathbb{R}^2))
    \ {\rm for\ }1 \leqslant q \leqslant r \leqslant \infty,\\
    &
    {L^r}(I; \dB_{p,q}^s(\mathbb{R}^2)) 
    \hookrightarrow
    \widetilde{L^r}(I; \dB_{p,q}^s(\mathbb{R}^2))
    \ {\rm for\ }1 \leqslant r \leqslant q \leqslant \infty.
\end{align}
See \cite{Bah-Che-Dan-11} for more precise information of the Chemin--Lerner spaces.
One advantage of using the Chemin--Lerner spaces is that there holds the following maximal regularity estimates for the heat kernel $e^{t\Delta}:=G_t*$, where $G_t(x):=(4\pi t)^{-1}e^{-\frac{|x|^2}{4t}}$ ($t>0$, $x \in \mathbb{R}^2$) is the two-dimensional Gaussian.
\begin{lemm}\label{lemm:max-reg}
There exists an absolute positive constant $C$ such that for any $0<T\leqslant \infty$, $1 \leqslant p,q \leqslant \infty$, $1 \leqslant r \leqslant r_0 \leqslant \infty$, and $s \in \mathbb{R}$, it holds
\begin{align}
    \left\| e^{t\Delta} F \right\|_{\widetilde{L^r}(0,T;\dB_{p,q}^{s+\frac{2}{r}})}
    &
    \leqslant
    C
    \| F \|_{\dB_{p,q}^s},\label{max-reg-hom}\\
    \left\| \int_0^t e^{(t-\tau)\Delta} f(\tau) d\tau \right\|_{\widetilde{L^{r_0}}(0,T;\dB_{p,q}^{s+\frac{2}{r_0}})}
    &
    \leqslant
    C
    \| f \|_{\widetilde{L^{r}}(0,T;\dB_{p,q}^{s-2+\frac{2}{r}})}\label{max-reg-inhom}
\end{align}
for all $F \in \dB_{p,q}^s(\mathbb{R}^2)$ and $f \in \widetilde{L^{r}}(0,T;\dB_{p,q}^{s-2+\frac{2}{r}}(\mathbb{R}^2))$.
\end{lemm}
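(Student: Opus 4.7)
The plan is to reduce both inequalities to an $\ell^q$-summed, frequency-localized version that can be controlled by the standard Bernstein-type decay estimate for the heat semigroup on dyadic blocks, namely
\begin{align}
    \left\| e^{t\Delta} \Delta_j F \right\|_{L^p}
    \leqslant
    C e^{-c 2^{2j} t} \left\| \Delta_j F \right\|_{L^p},
    \qquad t>0,\ j \in \mathbb{Z},
\end{align}
which follows from the fact that $\widehat{\Phi_j}$ is supported in the annulus $\{2^{j-1}\leqslant|\xi|\leqslant 2^{j+1}\}$, so that $e^{t\Delta}\Delta_j F = (G_t \Phi_j^\sharp) \ast \Delta_j F$ for a suitable modified kernel and Young's inequality yields the exponential factor $e^{-c 2^{2j}t}$.

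For the homogeneous bound \eqref{max-reg-hom}, I would apply $\Delta_j$ to $e^{t\Delta}F$, take the $L^p$ norm in $x$ and the $L^r$ norm in $t$ on $(0,T)$, and use the pointwise estimate above together with
\begin{align}
    \left\| e^{-c 2^{2j} t} \right\|_{L^r(0,T)}
    \leqslant
    \left\| e^{-c 2^{2j} t} \right\|_{L^r(0,\infty)}
    \leqslant
    C 2^{-\frac{2j}{r}}
\end{align}
uniformly in $T\in(0,\infty]$. Multiplying by $2^{(s+2/r)j}$ and taking the $\ell^q$ norm in $j$ immediately gives the right-hand side $C\|F\|_{\dB_{p,q}^s}$.

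For the Duhamel bound \eqref{max-reg-inhom}, the same frequency-localized decay gives
\begin{align}
    \left\| \Delta_j \int_0^t e^{(t-\tau)\Delta} f(\tau)\,d\tau \right\|_{L^p}
    \leqslant
    C \int_0^t e^{-c 2^{2j}(t-\tau)} \left\| \Delta_j f(\tau) \right\|_{L^p} d\tau,
\end{align}
which is a convolution in time on $(0,T)$. I would then apply Young's convolution inequality with exponents determined by $1+\tfrac{1}{r_0}=\tfrac{1}{r_1}+\tfrac{1}{r}$, using $\|e^{-c 2^{2j}t}\|_{L^{r_1}(0,\infty)}\leqslant C 2^{-2j/r_1}$, which is valid because $r\leqslant r_0$ ensures $r_1\in[1,\infty]$. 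This yields
\begin{align}
    2^{(s+\frac{2}{r_0})j}
    \left\| \Delta_j \int_0^t e^{(t-\tau)\Delta} f(\tau)\,d\tau \right\|_{L^{r_0}(0,T;L^p)}
    \leqslant
    C \, 2^{(s-2+\frac{2}{r})j} \left\| \Delta_j f \right\|_{L^{r}(0,T;L^p)},
\end{align}
and taking the $\ell^q$ norm in $j$ gives the stated estimate.

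The argument is essentially routine; the only point requiring care is keeping every constant independent of $T$ (including $T=\infty$), which is why I extend the $L^r$ norms of the exponential from $(0,T)$ to $(0,\infty)$ before evaluating them, and checking that the Young exponent $r_1$ is admissible, which is exactly guaranteed by the hypothesis $r\leqslant r_0$. No other obstacle is expected.
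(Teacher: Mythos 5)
Your proposal is correct and takes essentially the same approach as the paper: reduce to frequency-localized decay of the heat semigroup, convolve in time via Young's inequality, and sum in $\ell^q$. The only difference is that you derive the dyadic-block estimates from scratch (Bernstein decay plus Young), whereas the paper simply cites \cite{Bah-Che-Dan-11}*{Corollary 2.5} for exactly those localized inequalities before multiplying by $2^{sj}$ and taking $\ell^q$.
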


\begin{proof}
It follows from \cite{Bah-Che-Dan-11}*{Corollary 2.5} that there exists an absolute positive constant $C$ such that
\begin{align}
    2^{\frac{2}{r}j}
    \left\| \Delta_j e^{t\Delta}F \right\|_{L^r(0,T;L^p)} 
    &\leqslant 
    C
    \| \Delta_j F \|_{L^p}\\
    2^{\frac{2}{r_0}j}
    \left\| \Delta_j \int_0^t e^{(t-\tau)\Delta} f(\tau) d\tau \right\|_{L^{r_0}(0,T;L^p)}
    &\leqslant
    C
    2^{(-2+\frac{2}{r})j}
    \| \Delta_j f \|_{L^r(0,T;L^p)}
\end{align}
for all $j \in \mathbb{Z}$.
Multiplying these estimates by $2^{sj}$ and taking $\ell^{q}(\mathbb{Z})$-norm, we complete the proof.
\end{proof}

Making use of Lemma \ref{lemm:max-reg}, we derive the following nonlinear estimates.
\begin{lemm}\label{lemm:duhamel-est-2}
Let $0 < T \leqslant \infty$.
Let $p$, $q$, $\sigma$, $\zeta$, $q_1$, $q_2$, $q_3$, $q_4$, $r$, $r_0$, $r_1$, and $r_2$ satisfy
\begin{gather}
    1 \leqslant p, q, \sigma, \zeta , q_1, q_2 \leqslant \infty, \qquad
    1 \leqslant q_3,q_4 \leqslant q, \\
    1 \leqslant r \leqslant r_0, r_1, r_2 \leqslant \infty, \qquad
    2 < r_3, r_4 \leqslant \infty,\\
    1 + \frac{1}{q} = \frac{1}{\sigma} + \frac{1}{\zeta},\qquad
    \frac{1}{\zeta} \leqslant 
    \frac{1}{q_1} + \frac{1}{q_2} , \\
    \max\left\{ 0, 1-\frac{2}{p} \right\} <\frac{1}{r} = \frac{1}{r_1}+\frac{1}{r_2},\qquad
    \frac{1}{r_0} \leqslant \frac{1}{r_3} + \frac{1}{r_4}
\end{gather}
and 
\begin{align}
    2 \leqslant r_3 \leqslant \infty \qquad &{\rm if\ }q_3=1,\\
    2 \leqslant r_4 \leqslant \infty \qquad &{\rm if\ }q_4=1,\\
    \max\left\{ 0, 1-\frac{2}{p} \right\}  \leqslant \frac{1}{r} = \frac{1}{r_1}+\frac{1}{r_2} \qquad &{\rm if\ }q=\sigma=\infty,\ \zeta=1.
\end{align}
Then, there exists an absolute positive constant $C$, independent of all parameters, such that
\begin{align}\label{duhamel-est-2-1}
    \begin{split}
    &\left\|
    \int_0^t e^{(t-\tau)\Delta}(f(\tau)g(\tau)) d\tau 
    \right\|_{\widetilde{L^{r_0}}(0,T ; \dB_{p,q}^{\frac{2}{p}+\frac{2}{r_0}})}\\
    &\quad 
    \leqslant
    C\left( \frac{1}{r} - \max \left\{ 0,1-\frac{2}{p} \right\} \right)^{-\frac{1}{\sigma}}
    \| f \|_{\widetilde{L^{r_1}}(0,T ; \dB_{p,q_1}^{\frac{2}{p}-1+\frac{2}{r_1}})}
    \| g \|_{\widetilde{L^{r_2}}(0,T ; \dB_{p,q_2}^{\frac{2}{p}-1+\frac{2}{r_2}})}
    \\
    &\qquad 
    +
    C\left\{ \left( 1- \frac{2}{r_3} \right)^{-\frac{1}{q_3'}} + \left( 1- \frac{2}{r_4} \right)^{-\frac{1}{q_4'}} \right\}
    \| f \|_{\widetilde{L^{r_3}}(0,T ; \dB_{p,q_3}^{\frac{2}{p}-1+\frac{2}{r_3}})}
    \| g \|_{\widetilde{L^{r_4}}(0,T ; \dB_{p,q_4}^{\frac{2}{p}-1+\frac{2}{r_4}})}
    \end{split}
\end{align}
for all 
\begin{align}
    &
    f \in \widetilde{L^{r_1}}(0,T ; \dB_{p,q_1}^{\frac{2}{p}-1+\frac{2}{r_1}}(\mathbb{R}^2)) \cap  \widetilde{L^{r_3}}(0,T ; \dB_{p,q_3}^{\frac{2}{p}-1+\frac{2}{r_3}}(\mathbb{R}^2)),\\
    &
    g \in \widetilde{L^{r_2}}(0,T ; \dB_{p,q_2}^{\frac{2}{p}-1+\frac{2}{r_2}}(\mathbb{R}^2)) \cap  \widetilde{L^{r_4}}(0,T ; \dB_{p,q_4}^{\frac{2}{p}-1+\frac{2}{r_4}}(\mathbb{R}^2)).
\end{align}
Here, $q_3'$ and $q_4'$ denote the H\"older conjugate exponents of $q_3$ and $q_4$, respectively.
\end{lemm}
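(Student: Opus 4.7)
The strategy combines the maximal regularity of Lemma~\ref{lemm:max-reg} with Bony's paraproduct decomposition of $f(\tau)g(\tau)$ in the Chemin--Lerner setting. The starting point is to apply \eqref{max-reg-inhom} with $s=2/p$, reducing the left-hand side of \eqref{duhamel-est-2-1} to bilinear product estimates of the form $\|fg\|_{\widetilde{L^\rho}(0,T;\dB_{p,q}^{\frac{2}{p}-2+\frac{2}{\rho}})}$ for two different choices of the time exponent $\rho$: namely $1/\rho=1/r_1+1/r_2$ for the first line of the right-hand side (permitted by $r\leqslant r_0$), and $1/\rho=1/r_3+1/r_4$ for the second (permitted by $1/r_0\leqslant 1/r_3+1/r_4$). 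The main hypothesis $\max\{0,1-2/p\}<1/r$ guarantees the compatibility of the target regularity $s:=2/p-2+2/r$ with the critical Besov threshold at which Bony's remainder converges.

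Next I would decompose $fg=T_f g+T_g f+R(f,g)$ and treat each piece. For the paraproduct $T_f g$ (and symmetrically $T_g f$), one writes $\Delta_j T_f g=\Delta_j(S_{j-1}f\,\Delta_j g)$ up to harmless frequency shifts, bounds $\|S_{j-1}f\|_{L^\infty}$ via Bernstein on the low-frequency blocks of $f$, takes H\"older in time with $1/r=1/r_1+1/r_2$, and handles the discrete frequency sum by two successive applications of Young's convolution inequality: first $\ell^{q_1}\ast\ell^{q_2}\hookrightarrow\ell^\zeta$ (permitted by $1/\zeta\leqslant 1/q_1+1/q_2$), then $\ell^\sigma\ast\ell^\zeta\hookrightarrow\ell^q$ (which is precisely the identity $1+1/q=1/\sigma+1/\zeta$). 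The $\ell^\sigma$-norm of the geometric tail coming from the regularity gap $1/r-\max\{0,1-2/p\}$ produces the sharp factor $(1/r-\max\{0,1-2/p\})^{-1/\sigma}$. For the remainder $R(f,g)=\sum_j\Delta_j f\,\widetilde{\Delta}_j g$, localize its $k$-th block to $\sum_{j\geqslant k-N_0}\Delta_k(\Delta_j f\,\widetilde{\Delta}_j g)$, apply H\"older in time with $1/\rho=1/r_3+1/r_4$, and sum the geometric series in $j-k$; the conditions $r_3,r_4>2$ guarantee a positive geometric rate, and the quantitative Young inequality in $\ell^{q_3}$ (resp.\ $\ell^{q_4}$) yields the sharp factor $(1-2/r_3)^{-1/q_3'}$ (resp.\ $(1-2/r_4)^{-1/q_4'}$).

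The main obstacle is keeping the constants in the precise quantitative form displayed in \eqref{duhamel-est-2-1}: the factors $(1/r-\max\{0,1-2/p\})^{-1/\sigma}$ and $(1-2/r_i)^{-1/q_i'}$ are sharp bounds for geometric sums whose rate equals the gap between the assumed exponents and their critical thresholds, so any crude estimate would lose them. This forces quantitative use of Young's convolution inequality with explicit control on the $\ell^\sigma$ and $\ell^{q_i'}$-norms of the relevant tail sequences, rather than merely the boundedness of the convolution operator. Additional care is needed for the endpoint cases spelled out in the hypotheses: $q_3=1$ (resp.\ $q_4=1$) requires $r_3\geqslant 2$ (resp.\ $r_4\geqslant 2$), so that $q_3'=\infty$ renders the potentially singular factor equal to $1$; the boundary case $q=\sigma=\infty$, $\zeta=1$ absorbs the first factor (since $1/\sigma=0$), which is why the hypothesis allows the equality $1/r=\max\{0,1-2/p\}$ in that single situation. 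Once these endpoints are dispatched by direct inspection, the argument reduces to a careful application of the standard paraproduct calculus in Chemin--Lerner Besov spaces.
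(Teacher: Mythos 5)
Your overall strategy matches the paper's: apply the maximal regularity estimate \eqref{max-reg-inhom}, decompose the product via Bony's paraproduct, and extract the quantitative constants from the $\ell^\sigma$- and $\ell^{q_i'}$-norms of geometric tails. However, you have \emph{swapped} which constants come from which piece of the decomposition, and with that swap the estimates do not close under the stated hypotheses.

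Concretely, the paper writes $fg=I_1+I_2+I_3$ with $I_1=\sum_j\sum_{|i-j|\leqslant2}\Delta_if\,\Delta_jg$ the remainder (high-high) and $I_2,I_3$ the two paraproducts (low-high and high-low). It is the \emph{remainder} $I_1$ that is handled with the exponents $r_1,r_2,q_1,q_2,\sigma,\zeta$ and produces the factor $\left(\tfrac1r-\max\{0,1-\tfrac2p\}\right)^{-1/\sigma}$: the infinite sum $\sum_{j\geqslant k-4}$ inside $\Delta_kI_1$ carries a geometric rate $2^{2\mu(k-j)}$ with $\mu=\tfrac1r-\max\{0,1-\tfrac2p\}$, and the $\ell^\sigma$-norm of that tail is exactly $\left(\mu\right)^{-1/\sigma}$. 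The \emph{paraproducts} $I_2,I_3$ are handled with $r_3,r_4,q_3,q_4$ and produce $\left(1-\tfrac2{r_3}\right)^{-1/q_3'}$ and $\left(1-\tfrac2{r_4}\right)^{-1/q_4'}$: the infinite sum there is the low-frequency tail $\sum_{j\leqslant k-3}2^{(1-2/r_3)q_3'j}$, whose convergence is precisely why the hypotheses require $r_3,r_4>2$.

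In your proposal you assign $T_fg$, $T_gf$ (the paraproducts) to the exponents $r_1,r_2$ and $R(f,g)$ (the remainder) to $r_3,r_4$. This fails for two reasons. First, in the paraproduct estimate you bound $\|S_{j-1}f\|_{L^\infty}$ by Bernstein, and the resulting sum $\sum_{i\leqslant j-2}2^{(1-2/\rho_{\mathrm{low}})i}$ converges only when the low-frequency Lebesgue-in-time exponent exceeds $2$; the lemma's hypotheses permit $r_1=r_2=2$ (even $r_1=1$), under which this tail diverges, so the paraproduct cannot be controlled via $r_1,r_2$. Second, the factor $\left(\tfrac1r-\max\{0,1-\tfrac2p\}\right)^{-1/\sigma}$ cannot arise from the paraproduct, because after frequency localization $\Delta_kT_fg$ involves only a \emph{finite} sum $\sum_{|j-k|\leqslant C}$ — there is no geometric tail whose rate is governed by the gap $\tfrac1r-\max\{0,1-\tfrac2p\}$. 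That gap is meaningful only in the high-high regime, where after using Bernstein to pass from $L^{p/2}$ (or $L^1$) back to $L^p$ one sums $2^{2\mu(k-j)}$ over $j\geqslant k-C$. To repair the argument, simply exchange the roles: apply maximal regularity with time exponent $r=(1/r_1+1/r_2)^{-1}$ to the remainder and with $\rho=(1/r_3+1/r_4)^{-1}$ to the paraproducts, and the two quantitative constants then come out on the correct side of \eqref{duhamel-est-2-1}.
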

\begin{proof}
We first recall the para-product decomposition: 
\begin{align}
    fg
    ={}&
    I_1[f,g] + I_2[f,g] + I_3[f,g],
\end{align}
where
\begin{align}
    &I_1[f,g]
    :=
    \sum_{j \in \mathbb{Z}}\sum_{|i-j|\leqslant 2} \Delta_{i}f \Delta_jg,\\
    &I_2[f,g]
    :=
    \sum_{j \in \mathbb{Z}}\sum_{i \leqslant j-3}\Delta_{i}f \Delta_jg,\\
    &I_3[f,g]
    :=
    \sum_{j \in \mathbb{Z}}\Delta_{j}f \sum_{i \leqslant j-3} \Delta_ig
    =
    I_2[g,f].
\end{align}
We then decompose the left-hand side of \eqref{duhamel-est-2-1} as
\begin{align}\label{J0}
    \begin{split}
    \int_0^t e^{(t-\tau)\Delta}(f(\tau)g(\tau)) d\tau 
    &=
    \sum_{m=1}^3
    \int_0^t e^{(t-\tau)\Delta} I_m[f,g](\tau) d\tau\\
    &
    =:
    \sum_{m=1}^3 J_m[f,g](t).
    \end{split}
\end{align}
We first focus on the estimate for $J_1[f,g]$.
For the case of $1 \leqslant p \leqslant 2$,
Lemma \ref{lemm:max-reg} yields
\begin{align}\label{J-1-1}
    \| J_1[f,g] \|_{\widetilde{L^{r_0}}(0,T ; \dB_{p,q}^{\frac{2}{p}+\frac{2}{r_0}})}
    \leqslant{}
    C\| I_1[f,g] \|_{\widetilde{L^r}(0,T;\dB_{p,q}^{\frac{2}{p}-2+\frac{2}{r}})}
    \leqslant{}
    C\| I_1[f,g] \|_{\widetilde{L^r}(0,T;\dB_{1,q}^{\frac{2}{r}})}.
\end{align}
Using 
\begin{align}\label{DkI1}
    \Delta_kI_1[f,g]
    =
    \Delta_k
    \sum_{|\ell|\leqslant 2}
    \sum_{j \geqslant k-4 } \Delta_{j+\ell}f\Delta_{j}g
\end{align}
and the Hausdorff--Young inequality with $1+1/q=1/\sigma + 1/\zeta$, we have
\begin{align}\label{J-1-2}
    \begin{split}
    &
    \| I_1[f,g] \|_{\widetilde{L^r}(0,T;\dB_{1,q}^{\frac{2}{r}})}\\
    &\quad 
    \leqslant{}
    C\sum_{|\ell|\leqslant 2}\left\{ \sum_{k \in \mathbb{Z}} \left( \sum_{j \geqslant k-4} 2^{\frac{2}{r}(k-j)} 2^{\frac{2}{r}j}\|\Delta_{j+\ell}f\|_{L^{r_1}(0,T;L^p)}\| \Delta_jg  \|_{L^{r_2}(0,T;L^{p'})} \right)^q  \right\}^{\frac{1}{q}}\\
    &\quad 
    \leqslant{}
    C
    \left( \sum_{k \leqslant 4}  2^{\frac{2\sigma}{r}j} \right)^{\frac{1}{\sigma}}
    \sum_{|\ell|\leqslant 2}
    \left\{\sum_{j \in \mathbb{Z}} 
    \left(2^{(\frac{2}{r} + 2(\frac{2}{p}-1))j}\|\Delta_{j+\ell}f\|_{L^{r_1}(0,T;L^p)}\| \Delta_jg  \|_{L^{r_2}(0,T;L^{p})}
    \right)^{\zeta}
    \right\}^{\frac{1}{\zeta}}\\
    &\quad
    \leqslant{}
    Cr^{\frac{1}{\sigma}}
    \| f \|_{\widetilde{L^{r_1}}(0,T ; \dB_{p,q_1}^{\frac{2}{p}-1+\frac{2}{r_1}})}
    \| g \|_{\widetilde{L^{r_2}}(0,T ; \dB_{p,q_2}^{\frac{2}{p}-1+\frac{2}{r_2}})}.
    \end{split}
\end{align}
Here, $p'$ denotes the H\"older conjugate exponent of $p$.
For the case of $p \geqslant 2$, 
we see that
\begin{align}\label{J-1-3}
    \| J_1[f,g] \|_{\widetilde{L^{r_0}}(0,T ; \dB_{p,q}^{\frac{2}{p}+\frac{2}{r_0}})}
    \leqslant{}
    C\| I_1[f,g] \|_{\widetilde{L^r}(0,T;\dB_{p,q}^{\frac{2}{p}-2+\frac{2}{r}})}
    \leqslant{}
    C\| I_1[f,g] \|_{\widetilde{L^r}(0,T;\dB_{\frac{p}{2},q}^{2\mu})},
\end{align}
where we have set $\mu:=1/r-(1-2/p)$.
Using \eqref{DkI1} and the Hausdorff--Young inequality with $1+1/q=1/\sigma + 1/\zeta$, we have
\begin{align}\label{J-1-4}
    \begin{split}
    &
    \| I_1[f,g] \|_{\widetilde{L^r}(0,T;\dB_{\frac{p}{2},q}^{2\mu})}\\
    &\quad 
    \leqslant
    C\sum_{|\ell|\leqslant 2}
    \left\{ \sum_{k \in \mathbb{Z}} \left( \sum_{j \geqslant k-4} 2^{ 2 \mu (k-j) } 2^{ 2 \mu j}\|\Delta_{j+\ell}f\|_{L^{r_1}(0,T;L^p)}\| \Delta_jg  \|_{L^{r_2}(0,T;L^p)} \right)^q  \right\}^{\frac{1}{q}}\\
    &\quad 
    \leqslant{}
    C
    \left( \sum_{k \leqslant 4}  2^{2 \mu \sigma j} \right)^{\frac{1}{\sigma}}\\
    &\qquad\times
    \sum_{|\ell|\leqslant 2}
    \left\{
    \sum_{j \in \mathbb{Z}} 
    \left(
    2^{(\frac{2}{p}-1+\frac{2}{r_1})j}\|\Delta_{j+\ell}f\|_{L^{r_1}(0,T;L^p)}2^{(\frac{2}{p}-1+\frac{2}{r_2})j}\| \Delta_jg  \|_{L^{r_2}(0,T;L^{p})}
    \right)^{\zeta}
    \right\}^{\frac{1}{\zeta}}\\
    &\quad
    \leqslant{}
    C\left( \frac{1}{r} - \left( 1- \frac{2}{p} \right) \right)^{-\frac{1}{\sigma}}
    \| f \|_{\widetilde{L^{r_1}}(0,T ; \dB_{p,q_1}^{\frac{2}{p}-1+\frac{2}{r_1}})}
    \| g \|_{\widetilde{L^{r_2}}(0,T ; \dB_{p,q_2}^{\frac{2}{p}-1+\frac{2}{r_2}})}.
    \end{split}
\end{align}
Thus, combining the estimates \eqref{J-1-1}, \eqref{J-1-2}, \eqref{J-1-3}, and \eqref{J-1-4}, we obtain 
\begin{align}\label{J1}
    \begin{split}
    &\| J_1[f,g] \|_{\widetilde{L^{r_0}}(0,T ; \dB_{p,q}^{\frac{2}{p}+\frac{2}{r_0}})}\\
    &\quad\leqslant{}
    C\left( \frac{1}{r} - \max \left\{ 0,1-\frac{2}{p} \right\} \right)^{-\frac{1}{\sigma}}
    \| f \|_{\widetilde{L^{r_1}}(0,T ; \dB_{p,q_1}^{\frac{2}{p}-1+\frac{2}{r_1}})}
    \| g \|_{\widetilde{L^{r_2}}(0,T ; \dB_{p,q_2}^{\frac{2}{p}-1+\frac{2}{r_2}})}
    \end{split}
\end{align}
for all $1 \leqslant p \leqslant \infty$.

Next, we consider the estimate for $J_2[f,g]$ and $J_3[f,g]$.
Let $1 \leqslant \rho \leqslant \infty$ satisfy $1/\rho = 1/r_3 + 1/r_4$.
It follows from Lemma \ref{lemm:max-reg} that
\begin{align}
    \| J_2[f,g] \|_{\widetilde{L^{r_0}}(0,T ; \dB_{p,q}^{\frac{2}{p}+\frac{2}{r_0}})}
    \leqslant{}
    C\| I_2[f,g] \|_{\widetilde{L^{\rho}}(0,T;\dB_{p,q}^{\frac{2}{p}-2+\frac{2}{\rho}})}.
\end{align}
Using
\begin{align}
    \Delta_kI_2[f,g]
    =
    \Delta_k\sum_{|\ell|\leqslant 3}\sum_{j \leqslant k+\ell-3}\Delta_{j}f\Delta_{k+\ell}g,
\end{align}
we see that 
\begin{align}
    &
    \|\Delta_kI_2[f,g]\|_{L^{\rho}(0,T;L^p)}\\
    &\quad\leqslant{}
    C
    \sum_{|\ell|\leqslant 3}
    \sum_{j \leqslant k+\ell-3}
    \| \Delta_j f \|_{L^{r_3}(0,T;L^{\infty})}
    \| \Delta_{k+\ell}g\|_{L^{r_4}(0,T;L^p)}\\
    &\quad\leqslant{}
    C
    \left(
    \sum_{j \leqslant k}
    2^{(1-\frac{2}{r_3})q_3'j}
    \right)^{\frac{1}{q_3'}}
    \| f \|_{\widetilde{L^{r_3}}(0,T ; \dB_{\infty,q_3}^{-1+\frac{2}{r_3}})}
    \sum_{|\ell|\leqslant 3}
    \| \Delta_{k+\ell}g\|_{L^{r_4}(0,T;L^p)}
    \\
    &\quad\leqslant{}
    C
    2^{(1-\frac{2}{r_3})k}
    \left(
    1-\frac{2}{r_3}
    \right)^{-\frac{1}{q_3'}}
    \| f \|_{\widetilde{L^{r_3}}(0,T ; \dB_{p,q_3}^{\frac{2}{p}-1+\frac{2}{r_3}})}
    \sum_{|\ell|\leqslant 3}
    \| \Delta_{k+\ell}g \|_{L^{r_4}(0,T;L^p)}.
\end{align}
Multiplying this by $2^{(\frac{2}{p}-1+\frac{2}{\rho})k}$ and taking $\ell^{q}(\mathbb{Z})$ norm with respect to $k$, we obtain 
\begin{align}\label{J2}
    \begin{split}
    &\| J_2[f,g] \|_{\widetilde{L^{r_0}}(0,T ; \dB_{p,q}^{\frac{2}{p}+\frac{2}{r_0}})}\\
    &\quad\leqslant
    C
    \left( 1- \frac{2}{r_3} \right)^{-\frac{1}{q_3'}}
    \| f \|_{\widetilde{L^{r_3}}(0,T ; \dB_{p,q_3}^{\frac{2}{p}-1+\frac{2}{r_3}})}
    \| g \|_{\widetilde{L^{r_4}}(0,T ; \dB_{p,q}^{\frac{2}{p}-1+\frac{2}{r_4}})}\\
    &\quad
    \leqslant
    C
    \left( 1- \frac{2}{r_3} \right)^{-\frac{1}{q_3'}}
    \| f \|_{\widetilde{L^{r_3}}(0,T ; \dB_{p,q_3}^{\frac{2}{p}-1+\frac{2}{r_3}})}
    \| g \|_{\widetilde{L^{r_4}}(0,T ; \dB_{p,q_4}^{\frac{2}{p}-1+\frac{2}{r_4}})}.
    \end{split}
\end{align}
By the same argument, we also see that
\begin{align}\label{J3}
    \begin{split}
    &\| J_3[f,g] \|_{\widetilde{L^{r_0}}(0,T ; \dB_{p,q}^{\frac{2}{p}+\frac{2}{r_0}})}\\
    &\quad\leqslant
    C
    \left( 1- \frac{2}{r_4} \right)^{-\frac{1}{q_4'}}
    \| g \|_{\widetilde{L^{r_4}}(0,T ; \dB_{p,q_4}^{\frac{2}{p}-1+\frac{2}{r_4}})}
    \| f \|_{\widetilde{L^{r_3}}(0,T ; \dB_{p,q_3}^{\frac{2}{p}-1+\frac{2}{r_3}})}.
    \end{split}
\end{align}
Collecting \eqref{J0}, \eqref{J1}, \eqref{J2}, and \eqref{J3}, we complete the proof.
\end{proof}
Let us apply Lemma \ref{lemm:duhamel-est-2} to obtain several estimates for the nonlinear Duhamel integral defined by
\begin{align}\label{df-duhamel}
    \mathcal{D}[u,v](t)
    :=
    -
    \int_0^t e^{(t-\tau)\Delta} \mathbb{P}\div (u(\tau) \otimes v(\tau)) d\tau
\end{align}
for two space-time vector fields $u=(u_1(t,x),u_2(t,x))$ and $v=(v_1(t,x),v_2(t,x))$ ($t>0$, $x \in \mathbb{R}^2$).
\begin{lemm}\label{lemm:duhamel-est-1}
Let $0 < T \leqslant \infty$.
Let $p$, $q$, $r$, $r_0$, $r_1$, and $r_2$ satisfy
\begin{gather}
    1 \leqslant p,q,r \leqslant \infty,\qquad
    r \leqslant r_0 \leqslant \infty,\qquad
    2 < r_1,r_2 \leqslant \infty\\
    \max \left\{ 0, 1-\frac{2}{p} \right\}
    < \frac{1}{r}
    = \frac{1}{r_1} + \frac{1}{r_2}
\end{gather}
and $2 \leqslant r_1,r_2 \leqslant \infty$ if $q=1$.
Then, there exists a positive constant $C=C(p,q,r,r_0,r_1,r_2)$ such that
\begin{align}\label{duhamel-est-1-1}
    \left\| \mathcal{D}[u,v] \right\|_{\widetilde{L^{r_0}}(0,T;\dB_{p,q}^{\frac{2}{p}-1+\frac{2}{r_0}})}
    \leqslant
    C
    \left\| u \right\|_{\widetilde{L^{r_1}}(0,T;\dB_{p,q}^{\frac{2}{p}-1+\frac{2}{r_1}})}
    \left\| v \right\|_{\widetilde{L^{r_2}}(0,T;\dB_{p,q}^{\frac{2}{p}-1+\frac{2}{r_2}})}
\end{align}
for all 
$u \in \widetilde{L^{r_1}}(0,T;\dB_{p,q}^{\frac{2}{p}-1+\frac{2}{r_1}}(\mathbb{R}^2))$
and
$v \in \widetilde{L^{r_2}}(0,T;\dB_{p,q}^{\frac{2}{p}-1+\frac{2}{r_2}}(\mathbb{R}^2))$.
\end{lemm}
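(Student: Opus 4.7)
The plan is to reduce Lemma \ref{lemm:duhamel-est-1} to Lemma \ref{lemm:duhamel-est-2} by pulling the first-order operator $\mathbb{P}\div$ out of the Duhamel integral. Since $\mathbb{P}\div$ is a Fourier multiplier that commutes with the heat semigroup $e^{t\Delta}$, we may write
\begin{align}
    \mathcal{D}[u,v](t)
    =
    -\mathbb{P}\div \int_0^t e^{(t-\tau)\Delta}\bigl(u(\tau) \otimes v(\tau)\bigr) d\tau.
\end{align}
The Helmholtz projector $\mathbb{P}$ is a matrix of zero-order Fourier multipliers that are smooth on $\mathbb{R}^2\setminus\{0\}$, hence bounded on every homogeneous Besov space $\dB_{p,q}^{s}(\mathbb{R}^2)$, and $\div$ loses exactly one derivative, so $\mathbb{P}\div$ maps $\dB_{p,q}^{s}$ continuously into $\dB_{p,q}^{s-1}$. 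This boundedness extends to the Chemin--Lerner setting by taking the $L^{r_0}$-norm in time on each dyadic block, and reduces the claim to the inequality
\begin{align}
    \left\| \int_0^t e^{(t-\tau)\Delta}\bigl(u(\tau)\otimes v(\tau)\bigr)\, d\tau \right\|_{\widetilde{L^{r_0}}(0,T ; \dB_{p,q}^{\frac{2}{p}+\frac{2}{r_0}})}
    \leqslant
    C
    \| u \|_{\widetilde{L^{r_1}}(0,T ; \dB_{p,q}^{\frac{2}{p}-1+\frac{2}{r_1}})}
    \| v \|_{\widetilde{L^{r_2}}(0,T ; \dB_{p,q}^{\frac{2}{p}-1+\frac{2}{r_2}})}.
\end{align}

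The next step is to apply Lemma \ref{lemm:duhamel-est-2} componentwise with $f = u_i$ and $g = v_j$. The choice of parameters is dictated by the statement of Lemma \ref{lemm:duhamel-est-1}: take $q_1 = q_2 = q_3 = q_4 = q$, keep $r_1, r_2$ as given, and set $r_3 := r_1$ and $r_4 := r_2$. The hypothesis $\max\{0, 1-2/p\} < 1/r = 1/r_1 + 1/r_2$ and $r \leqslant r_0$ are inherited directly from Lemma \ref{lemm:duhamel-est-1}, and the conditions $2 < r_3, r_4 \leqslant \infty$ (resp. $2 \leqslant r_3, r_4 \leqslant \infty$ when $q=1$) match our assumption $r_1, r_2 > 2$ (resp. $r_1, r_2 \geqslant 2$). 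For the remaining parameters $\sigma$ and $\zeta$ governing the first term of \eqref{duhamel-est-2-1}, it suffices to pick, for instance, $\sigma = 1$ and $\zeta = q$, which satisfies both $1 + 1/q = 1/\sigma + 1/\zeta$ and $1/\zeta = 1/q \leqslant 2/q = 1/q_1 + 1/q_2$.

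With these choices, all of the exponential-type constants appearing in \eqref{duhamel-est-2-1}, namely
\begin{align}
    \left( \tfrac{1}{r} - \max \{ 0,1-\tfrac{2}{p}\} \right)^{-\frac{1}{\sigma}},\qquad
    \left( 1- \tfrac{2}{r_3} \right)^{-\frac{1}{q_3'}},\qquad
    \left( 1- \tfrac{2}{r_4} \right)^{-\frac{1}{q_4'}},
\end{align}
are finite under the hypotheses of Lemma \ref{lemm:duhamel-est-1} and depend only on $p, q, r, r_1, r_2$; absorbing them together with the operator norm of $\mathbb{P}\div$ on Besov spaces into a single constant $C = C(p,q,r,r_0,r_1,r_2)$ yields the claim. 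I do not expect a substantive obstacle here: the only bookkeeping to be careful about is checking that the strict inequality $\max\{0, 1-2/p\} < 1/r$ in the hypothesis guarantees finiteness of the first constant above (rather than merely the non-strict version allowed in the last bullet of Lemma \ref{lemm:duhamel-est-2}), and that the endpoint condition $r_i \geqslant 2$ when $q=1$ correctly accommodates $q_3 = q_4 = q = 1$ in the second term.
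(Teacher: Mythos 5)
Your proposal is correct and follows essentially the same route as the paper's proof: apply Lemma \ref{lemm:duhamel-est-2} componentwise to the entries $u_i v_j$ after pulling out $\mathbb{P}\div$, with exactly the parameter choices $\sigma=1$, $\zeta=q_1=q_2=q_3=q_4=q$, $r_3=r_1$, $r_4=r_2$. The only difference is presentational — you spell out the boundedness of $\mathbb{P}\div$ and the finiteness checks on the constants in more detail than the paper does.
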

\begin{proof}
Let $\sigma=1$, $\zeta=q_1=q_2=q_3=q_4=q$, $r_3=r_1$, and $r_4=r_2$.
Then, Lemma \ref{lemm:duhamel-est-2} yields 
\begin{align}
    \left\| \mathcal{D}[u,v] \right\|_{\widetilde{L^{r_0}}(0,T;\dB_{p,q}^{\frac{2}{p}-1+\frac{2}{r_0}})}
    \leqslant{}&
    C
    \sum_{k,\ell=1}^2
    \left\|
    \int_0^t e^{(t-\tau)\Delta}(u_k(\tau)v_{\ell}(\tau)) d\tau
    \right\|_{\widetilde{L^{r_0}}(0,T;\dB_{p,q}^{\frac{2}{p}+\frac{2}{r_0}})}\\
    \leqslant{}&
    C
    \left\| u \right\|_{\widetilde{L^{r_1}}(0,T;\dB_{p,q}^{\frac{2}{p}-1+\frac{2}{r_1}})}
    \left\| v \right\|_{\widetilde{L^{r_2}}(0,T;\dB_{p,q}^{\frac{2}{p}-1+\frac{2}{r_2}})}
\end{align}
and this completes the proof.
\end{proof}
\begin{lemm}\label{lemm:prod-2}
    Let $0<T<\infty$ and $1 \leqslant p \leqslant 2$.
    Then there exists a positive constant $K_0=K_0(p)$ such that 
    \begin{align}\label{duhamel-est-3}
        \sup_{0 \leqslant t \leqslant T}
        \| \mathcal{D}[u,v](t) \|_{\dB_{p,\infty}^{\frac{2}{p}-1}}
        \leqslant
        K_0
        \| u \|_{\widetilde{L^{\infty}}(0,T;\dB_{p,1}^{\frac{2}{p}-1})}
        \sup_{0 \leqslant t \leqslant T}
        \| v(t) \|_{\dB_{p,\infty}^{\frac{2}{p}-1}}
    \end{align}
    for all $u \in \widetilde{C}([0,T];\dB_{p,1}^{\frac{2}{p}-1}(\mathbb{R}^2))$ and $v \in C([0,T];\dB_{p,\infty}^{\frac{2}{p}-1}(\mathbb{R}^2))$.
\end{lemm}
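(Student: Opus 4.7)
The plan is to estimate $\mathcal{D}[u,v]$ by the paraproduct decomposition $u\otimes v = I_1[u,v]+I_2[u,v]+I_3[u,v]$ from the proof of Lemma~\ref{lemm:duhamel-est-2}, controlling the Littlewood--Paley piece $\Delta_k\mathcal{D}_m[u,v]$ at each dyadic level $k$, where $\mathcal{D}_m[u,v](t):=-\int_0^t e^{(t-\tau)\Delta}\mathbb{P}\div I_m[u,v](\tau)\,d\tau$. For all three pieces the starting point is the heat smoothing
\begin{equation*}
    \sup_{0\le t\le T}\|\Delta_k \mathcal{D}_m[u,v](t)\|_{L^p}
    \le C\,2^{-k}\sup_{0\le\tau\le T}\|\Delta_k I_m[u,v](\tau)\|_{L^p},
\end{equation*}
obtained from $\|\mathbb{P}\div e^{(t-\tau)\Delta}\Delta_k f\|_{L^p}\lesssim 2^k e^{-c 2^{2k}(t-\tau)}\|\Delta_k f\|_{L^p}$ combined with $\int_0^t 2^k e^{-c 2^{2k}(t-\tau)}d\tau\lesssim 2^{-k}$. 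Writing $U_j:=\|\Delta_j u\|_{L^\infty_t L^p}$, $V_j:=\|\Delta_j v\|_{L^\infty_t L^p}$, $W_j:=2^{(\frac{2}{p}-1)j}U_j$, $Z_j:=2^{(\frac{2}{p}-1)j}V_j$, one has $\|W\|_{\ell^1}=\|u\|_{\widetilde{L^\infty}\dB_{p,1}^{\frac{2}{p}-1}}$ and $\|Z\|_{\ell^\infty}=\sup_t\|v(t)\|_{\dB_{p,\infty}^{\frac{2}{p}-1}}$, so the claim reduces to $\sup_{k}2^{(\frac{2}{p}-1)k}\sup_t\|\Delta_k\mathcal{D}_m[u,v](t)\|_{L^p}\lesssim\|W\|_{\ell^1}\|Z\|_{\ell^\infty}$ for each $m=1,2,3$.

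For the low--high and high--low pieces $I_2,I_3$, Fourier-support considerations restrict the outer index $j$ to $|j-k|\le C$. Applying the standard $\|\Delta_i u\|_{L^\infty}\lesssim 2^{\frac{2}{p}i}\|\Delta_i u\|_{L^p}$ on the low-frequency factor (and symmetrically for $v$) and rescaling, the bound for $I_2$ reduces to $Z_k\sum_{i\le k}2^{i-k}W_i\le\|Z\|_{\ell^\infty}\|W\|_{\ell^1}$, and for $I_3$ to $W_k\sum_{i\le k}2^{i-k}Z_i\le 2\|W\|_{\ell^\infty}\|Z\|_{\ell^\infty}\le 2\|W\|_{\ell^1}\|Z\|_{\ell^\infty}$. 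These two pieces are routine and do not use the restriction $p\le 2$.

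The decisive obstacle is the diagonal piece $I_1$. The naive bound $\|\Delta_{j+\ell}u\,\Delta_j v\|_{L^p}\lesssim 2^{\frac{2}{p}j}\|\Delta_{j+\ell}u\|_{L^p}\|\Delta_j v\|_{L^p}$ yields, after rescaling, the sum $\sum_{j\ge k-C}2^{(2-\frac{2}{p})j}W_jZ_j$ whose weight is \emph{increasing} in $j$ whenever $p>1$, and thus cannot be dominated by $\|W\|_{\ell^1}\|Z\|_{\ell^\infty}$. To bypass this I will invoke the \emph{reverse} Bernstein inequality $\|\Delta_k g\|_{L^p}\lesssim 2^{2k(1-\frac{1}{p})}\|g\|_{L^1}$ on the \emph{output} frequency, followed by H\"older's inequality $\|\Delta_{j+\ell}u\,\Delta_j v\|_{L^1}\le\|\Delta_{j+\ell}u\|_{L^{p'}}\|\Delta_j v\|_{L^p}$ and Bernstein $\|\Delta_{j+\ell}u\|_{L^{p'}}\lesssim 2^{2j(\frac{2}{p}-1)}\|\Delta_{j+\ell}u\|_{L^p}$, the last step being valid precisely because $p\le 2$ implies $p'\ge p$. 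This chain yields
\begin{equation*}
    \|\Delta_k I_1[u,v](\tau)\|_{L^p}\lesssim 2^{2k(1-\frac{1}{p})}\sum_{j\ge k-C}2^{2j(\frac{2}{p}-1)}U_j V_j,
\end{equation*}
and the identities $2^{(\frac{2}{p}-1)k-k+2k(1-\frac{1}{p})}=1$ and $2^{2j(\frac{2}{p}-1)}U_j V_j=W_j Z_j$ collapse the powers of $2$, producing $2^{(\frac{2}{p}-1)k}\sup_t\|\Delta_k\mathcal{D}_1[u,v](t)\|_{L^p}\lesssim\sum_{j\ge k-C}W_j Z_j\le\|W\|_{\ell^1}\|Z\|_{\ell^\infty}$. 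Summing the three contributions and taking $\sup_k$ yields the lemma, with $K_0=K_0(p)$ absorbing the Bernstein constants.
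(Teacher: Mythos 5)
Your proof is correct and is essentially the same argument as the paper's: the paper's proof of this lemma simply cites Lemma~\ref{lemm:duhamel-est-2} with $\zeta = q_1 = q_3 = 1$ and $q = \sigma = q_2 = q_4 = r = r_0 = r_1 = r_2 = r_3 = r_4 = \infty$, and the proof of that general lemma rests on exactly the paraproduct decomposition, the $2^{-k}$ heat-kernel gain on each dyadic block, and the Bernstein sandwich you use for the diagonal piece (reverse Bernstein $L^1 \to L^p$ on the output together with Bernstein $L^p \to L^{p'}$ on one factor, which is available precisely because $p \leqslant 2$). You have unwound the cited abstract estimate into a self-contained direct proof, with the role of the hypothesis $p \leqslant 2$ identified in the same way as in the paper.
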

\begin{proof}
We set $\zeta=q_1=q_3=1$ and $q=q=q_2=q_4=\sigma=r=r_0=r_1=r_2=\infty$.
Then, from Lemma \ref{lemm:duhamel-est-2}, we have
\begin{align}
        \sup_{0 \leqslant t \leqslant T}
        \| \mathcal{D}[u,v](t) \|_{\dB_{p,\infty}^{\frac{2}{p}-1}}
        \leqslant{}&
        C
        \sum_{k,\ell=1}^2
        \left\|
        \int_0^t e^{(t-\tau)\Delta}(u_k(\tau)v_{\ell}(\tau)) d\tau
        \right\|_{\widetilde{L^{\infty}}(0,T;\dot{B}_{p,\infty}^{\frac{2}{p}})}\\
        \leqslant{}&
        C
        \| u \|_{\widetilde{L^{\infty}}(0,T;\dB_{p,1}^{\frac{2}{p}-1})}
        \sup_{0 \leqslant t \leqslant T}
        \| v(t) \|_{\dB_{p,\infty}^{\frac{2}{p}-1}}
    \end{align}
and complete the proof.
\end{proof}
Finally, we state a couple of two estimates, which plays a key role in the proof of Theorem \ref{thm:ill} below.
\begin{lemm}\label{lemm:ill-duhamel-est}
There exists an absolute positive constant $C$ such that
for any $0 < T \leqslant \infty$, $1 \leqslant p \leqslant 2$, and $3 \leqslant N < \infty$, it holds
\begin{align}
    &
    \begin{aligned}\label{ill-duhamel-est-1}
    \left\|
    \mathcal{D}[u,v] 
    \right\|_{\widetilde{L^{\infty}}(0,T;\dot{B}_{p,1}^{\frac{2}{p}-1})}
    \leqslant{}
    &
    CN
    \| u \|_{\widetilde{L^N}(0,T;\dot{B}_{p,2}^{\frac{2}{p}-1+\frac{2}{N}})}
    \| v \|_{\widetilde{L^N}(0,T;\dot{B}_{p,2}^{\frac{2}{p}-1+\frac{2}{N}})}\\
    &
    +
    C
    \| u \|_{\widetilde{L^{\infty}}(0,T;\dot{B}_{p,1}^{\frac{2}{p}-1})}
    \| v \|_{\widetilde{L^{\infty}}(0,T;\dot{B}_{p,1}^{\frac{2}{p}-1})},
    \end{aligned}\\
    &
    \begin{aligned}\label{ill-duhamel-est-2}
    \left\|
    \mathcal{D}[u,v]
    \right\|_{\widetilde{L^N}(0,T;\dot{B}_{p,2}^{\frac{2}{p}-1+\frac{2}{N}})}
    \leqslant
    C\sqrt{N}
    \| u \|_{\widetilde{L^N}(0,T;\dot{B}_{p,2}^{\frac{2}{p}-1+\frac{2}{N}})}
    \| v \|_{\widetilde{L^N}(0,T;\dot{B}_{p,2}^{\frac{2}{p}-1+\frac{2}{N}})}
    \end{aligned}
\end{align}
for all
$u,v \in \widetilde{L^{\infty}}(0,T;\dot{B}_{p,1}^{\frac{2}{p}-1}(\mathbb{R}^2)) \cap \widetilde{L^N}(0,T;\dot{B}_{p,2}^{\frac{2}{p}-1+\frac{2}{N}}(\mathbb{R}^2))$.
\end{lemm}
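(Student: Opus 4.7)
The plan is to derive both estimates directly from Lemma \ref{lemm:duhamel-est-2} applied componentwise to $u\otimes v$, exactly as in the proof of Lemma \ref{lemm:duhamel-est-1}; the divergence and Leray projection $\mathbb{P}$ inside $\mathcal{D}[u,v]$ shift the target regularity from $\tfrac{2}{p}+\tfrac{2}{r_0}$ down to $\tfrac{2}{p}-1+\tfrac{2}{r_0}$. The key observation is that since $p\leqslant 2$ we have $\max\{0,1-2/p\}=0$, so the first constant in \eqref{duhamel-est-2-1} reduces to $(1/r)^{-1/\sigma}$; with the balanced choice $r_1=r_2=N$, hence $1/r=2/N$, this equals $(N/2)^{1/\sigma}$. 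The two different powers of $N$ in \eqref{ill-duhamel-est-1} and \eqref{ill-duhamel-est-2} are then produced by selecting $\sigma=1$ versus $\sigma=2$.

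For the first estimate \eqref{ill-duhamel-est-1}, I would apply Lemma \ref{lemm:duhamel-est-2} with $r_0=\infty$, $q=1$, $(r_1,r_2,q_1,q_2)=(N,N,2,2)$, and $\sigma=\zeta=1$. The identity $1+1/q=1/\sigma+1/\zeta$ reads $2=1+1$ and $1/\zeta\leqslant 1/q_1+1/q_2$ reads $1\leqslant 1$, so admissibility holds; the first summand of \eqref{duhamel-est-2-1} then contributes $C(2/N)^{-1}=CN/2$ times the product of the $\widetilde{L^{N}}(\dB_{p,2}^{2/p-1+2/N})$-norms. For the second summand I would choose $(r_3,r_4,q_3,q_4)=(\infty,\infty,1,1)$, which is admissible since $q_3,q_4\leqslant q=1$ and the requirement $r_3,r_4\geqslant 2$ when $q_3=q_4=1$ is met; the prefactor $(1-2/\infty)^{-1/\infty}$ collapses to an absolute constant, yielding exactly the second term on the right-hand side of \eqref{ill-duhamel-est-1}.

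For the second estimate \eqref{ill-duhamel-est-2}, I would apply Lemma \ref{lemm:duhamel-est-2} with $r_0=N$, $q=2$, $(r_1,r_2,q_1,q_2)=(N,N,2,2)$, and this time $\sigma=2$, $\zeta=1$. Now $1+1/q=3/2=1/2+1$ and $1/\zeta=1\leqslant 1=1/q_1+1/q_2$, so the first summand of \eqref{duhamel-est-2-1} contributes $C(2/N)^{-1/2}=C\sqrt{N/2}$; this is the source of the $\sqrt{N}$ factor. For the second summand I would choose $(r_3,r_4,q_3,q_4)=(N,N,2,2)$, which is admissible because $N\geqslant 3>2$ and $q_3,q_4\leqslant q=2$, and its prefactor $2(1-2/N)^{-1/2}$ is uniformly bounded by $2\sqrt{3}$ for $N\geqslant 3$, so it is absorbed into $C\sqrt{N}$.

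The calculation itself is not the difficulty; the real task is bookkeeping, ensuring that the chosen parameters simultaneously (i) place $u$ and $v$ in precisely the Besov--Chemin--Lerner spaces appearing on the right-hand sides, and (ii) produce the sharp powers $N$ and $\sqrt{N}$. The new ingredient compared with Lemma \ref{lemm:duhamel-est-1}, where only $\sigma=1$ was ever used, is to exploit the freedom $\sigma>1$ permitted by $q=2$ in the second case, trading linear growth for $\sqrt{N}$; this improvement is precisely what will allow the iteration behind Theorem \ref{thm:ill} to close on the long time interval $[0,T_N]$.
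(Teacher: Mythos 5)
Your proposal is correct and matches the paper's own proof essentially line for line: the paper applies Lemma \ref{lemm:duhamel-est-2} componentwise to $u\otimes v$ (picking up the lost derivative from $\mathbb{P}\div$ exactly as you describe) with $q=\sigma=\zeta=1$, $q_1=q_2=2$, $q_3=q_4=1$, $r_0=r_3=r_4=\infty$, $r=N/2$, $r_1=r_2=N$ for \eqref{ill-duhamel-est-1}, and $q=\sigma=q_1=q_2=q_3=q_4=2$, $\zeta=1$, $r=N/2$, $r_0=r_1=r_2=r_3=r_4=N$ for \eqref{ill-duhamel-est-2} — the same choices you make. Your admissibility checks and the identification of the $N$ versus $\sqrt{N}$ growth from $\sigma=1$ versus $\sigma=2$ are all accurate.
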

\begin{proof}
Using Lemma \ref{lemm:duhamel-est-2} with $q=\sigma=\zeta=1$, $q_1=q_2=2$, $q_3=q_4=1$, $r_0=r_3=r_4=\infty$, $r=N/2$, and $r_1=r_2=N$, we obtain 
\begin{align}
    \left\|
    \mathcal{D}[u,v] 
    \right\|_{\widetilde{L^{\infty}}(0,T;\dot{B}_{p,1}^{\frac{2}{p}-1})}
    \leqslant{}&
    C
    \sum_{k,\ell=1}^2
    \left\|
    \int_0^t e^{(t-\tau)\Delta}(u_k(\tau)v_{\ell}(\tau)) d\tau
    \right\|_{\widetilde{L^{\infty}}(0,T;\dot{B}_{p,1}^{\frac{2}{p}})}\\
    \leqslant{}&
    CN
    \| u \|_{\widetilde{L^N}(0,T;\dot{B}_{p,2}^{\frac{2}{p}-1+\frac{2}{N}})}
    \| v \|_{\widetilde{L^N}(0,T;\dot{B}_{p,2}^{\frac{2}{p}-1+\frac{2}{N}})}\\
    &
    +
    C
    \| u \|_{\widetilde{L^{\infty}}(0,T;\dot{B}_{p,1}^{\frac{2}{p}-1})}
    \| v \|_{\widetilde{L^{\infty}}(0,T;\dot{B}_{p,1}^{\frac{2}{p}-1})},
\end{align}
which implies \eqref{ill-duhamel-est-1}.
From Lemma \ref{lemm:duhamel-est-2} with $q=\sigma=q_1=q_2=q_3=q_4=2$, $\zeta=1$, $r=N/2$, and $r_0=r_1=r_2=r_3=r_4=N$, it follows that
\begin{align}
    \left\|
    \mathcal{D}[u,v]
    \right\|_{\widetilde{L^N}(0,T;\dot{B}_{p,2}^{\frac{2}{p}-1+\frac{2}{N}})}
    \leqslant{}&
    C
    \sum_{k,\ell=1}^2
    \left\|
    \int_0^t e^{(t-\tau)\Delta}(u_k(\tau)v_{\ell}(\tau)) d\tau
    \right\|_{\widetilde{L^N}(0,T;\dot{B}_{p,2}^{\frac{2}{p}+\frac{2}{N}})}\\
    \leqslant{}&
    C\sqrt{N}
    \| u \|_{\widetilde{L^N}(0,T;\dot{B}_{p,2}^{\frac{2}{p}-1+\frac{2}{N}})}
    \| v \|_{\widetilde{L^N}(0,T;\dot{B}_{p,2}^{\frac{2}{p}-1+\frac{2}{N}})}.
\end{align}
Thus, we have \eqref{ill-duhamel-est-2} and complete the proof.
\end{proof}

\section{Nonstationary analysis}\label{sec:nNS}
Let us consider the {\it nonstationary} incompressible Navier--Stokes equations with the {\it stationary} external force:
\begin{align}\label{eq:nNS-0}
    \begin{cases}
        \partial_t u - \Delta u + \mathbb{P}\div(u \otimes u) = \mathbb{P}F, \qquad & t>0,x \in \mathbb{R}^2,\\
        \div u = 0, \qquad &  t\geqslant 0,x \in \mathbb{R}^2,\\
        u(0,x)=0, \qquad & x \in \mathbb{R}^2.
    \end{cases}
\end{align}
Here, $u=u(t,x):(0,\infty) \times \mathbb{R}^2 \to \mathbb{R}^2$ denote the unknown {\it nonstationary} velocity of the fluid,
and $F=F(x):\mathbb{R}^2 \to \mathbb{R}^2$ is the given {\it stationary} external force.
By the Duhamel principle and 
\begin{align}
    \int_0^te^{(t-\tau)\Delta}\mathbb{P}Fd\tau=(-\Delta)^{-1}\left(1-e^{t\Delta}\right)\mathbb{P}F,
\end{align}
the equation \eqref{eq:nNS-0} is formally equivalent to
\begin{align}\label{int:nNS-0}
    u(t)=(-\Delta)^{-1}\left(1-e^{t\Delta}\right)\mathbb{P}F + \mathcal{D}[u,u](t),
\end{align}
where the nonlinear Duhamel term $\mathcal{D}[\cdot , \cdot]$ is defined in \eqref{df-duhamel}.
We say that $u$ is a mild solution to \eqref{eq:nNS-0} if $u$ satisfies \eqref{int:nNS-0}.

\subsection{Global ill-posedness}\label{subseq:2}
Since the external force in \eqref{eq:nNS-0} does not depends on time, 
it is excepted that the solution to \eqref{eq:nNS-0} does not decay in time.
However, it is difficult to close the nonlinear estimates in the scaling critical spaces that include functions non-decaying in time such as $\widetilde{L^{\infty}}([0,\infty);\dB_{p,q}^{\frac{2}{p}-1}(\mathbb{R}^2))$ (see Lemmas \ref{lemm:duhamel-est-1} and \ref{lemm:prod-2}).
Thus, it is hard to construct a bounded-in-time global solution to \eqref{eq:nNS-0}.
In this subsection,
we justify the above consideration in the sense that for every $1 \leqslant p \leqslant 2$, the solution map $\dB_{p,1}^{\frac{2}{p}-3}(\mathbb{R}^2) \ni F \mapsto u \in \widetilde{C}( [0,\infty) ; \dB_{p,1}^{\frac{2}{p}-1}(\mathbb{R}^2) )$ is {\it discontinuous} even if it exists.
More precisely we show that there exist 
two sequences $\{ F_N \}_{N \in \mathbb{N}} \subset \dB_{p,1}^{\frac{2}{p}-3}(\mathbb{R}^2)$ of external forces
and $\{ T_N \}_{N\in \mathbb{N}} \subset (0,\infty)$ of times 
satisfying 
\begin{align}
    \lim_{N \to \infty} \| F_N \|_{\dB_{p,1}^{\frac{2}{p}-3}} = 0,\qquad
    \lim_{N \to \infty} T_N=\infty,
\end{align}
such that \eqref{eq:nNS-0} with the external force $F_N$ admits a solution $u_N \in \widetilde{C}([0,T_N];\dB_{p,1}^{\frac{2}{p}-1}(\mathbb{R}^2))$ satisfying
\begin{align}
    \liminf_{N\to \infty}\| u_N(T_N) \|_{\dB_{p,1}^{\frac{2}{p}-1}} > 0. 
\end{align}
In this paper, we call this phenomenon as the {\it global ill-posedness}.
The aim of this subsection is to prove the following theorem.
\begin{thm}\label{thm:ill}
Let $1 \leqslant p \leqslant 2$.
Then, there exist two positive constants $\delta_1=\delta_1(p)$ and $K_1=K_1(p)$ such that for any $0<\delta \leqslant \delta_1$, there exists a sequence $\{ F_{\delta,N} \}_{N \in \mathbb{N}} \subset \dB_{p,1}^{\frac{2}{p}-3}(\mathbb{R}^2)$ of external forces such that the following two statements are true:
\begin{itemize}
    \item [(i)]
    For any $N\in\mathbb{N}$, it holds
    \begin{align}
        \| F_{\delta,N} \|_{\dB_{p,1}^{\frac{2}{p}-3}}\leqslant \frac{K_1\delta}{\sqrt{N}}.
    \end{align}
    \item [(ii)] 
    Let $T_N:=2^{2N}$.
    Then,
    for each integer $N \geqslant 3$, \eqref{eq:nNS-0} with the external force $F_{\delta,N}$ admits a mild solution $u_{\delta,N} \in \widetilde{C}([0,T_N];\dB_{p,1}^{\frac{2}{p}-1}(\mathbb{R}^2))$ 
    satisfying 
    \begin{align}\label{ill-est}
        \liminf_{N\to \infty} \| u_{\delta,N}(T_N) \|_{\dB_{p,1}^{\frac{2}{p}-1}} > \frac{\delta^2}{K_1},\quad
        \limsup_{N\to \infty} \| u_{\delta,N} \|_{\widetilde{L^{\infty}}(0,T_N;\dB_{p,1}^{\frac{2}{p}-1})} < K_1\delta^2.
    \end{align}
\end{itemize}
\end{thm}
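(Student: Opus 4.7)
I plan to solve \eqref{eq:nNS-0} by Picard iteration in the mixed-regularity space
\[
X_N := \widetilde{L^\infty}\bigl([0,T_N];\dB^{\frac{2}{p}-1}_{p,1}(\mathbb{R}^2)\bigr) \cap \widetilde{L^N}\bigl([0,T_N];\dB^{\frac{2}{p}-1+\frac{2}{N}}_{p,2}(\mathbb{R}^2)\bigr),
\]
and to design $F_{\delta,N}$ so that the bilinear self-interaction of the linear iterate creates a coherent low-frequency mode at scale $|\xi|\sim 2^{-N}$, whose heat-dissipation time equals $T_N=2^{2N}$. A natural template, modelled on \eqref{rem:force}, is a superposition of $N$ frequency-localized wave packets of amplitude $\delta/\sqrt N$, with carrier frequencies chosen so that their pairwise differences populate dyadic scales down to $2^{-N}$. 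The gap between $\ell^1$- and $\ell^2$-summation of $N$ essentially disjoint Littlewood--Paley blocks then provides
\[
\|F_{\delta,N}\|_{\dB^{\frac{2}{p}-3}_{p,1}}\lesssim \delta/\sqrt N,
\qquad
\|u^{(1)}\|_{\widetilde{L^N}(0,T_N;\dB^{\frac{2}{p}-1+\frac{2}{N}}_{p,2})}\lesssim \delta/\sqrt N,
\]
where $u^{(1)}(t):=\int_0^t e^{(t-\tau)\Delta}\mathbb{P}F_{\delta,N}\,d\tau$; the second bound is obtained from Lemma~\ref{lemm:max-reg} together with the frequency localization.

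\textbf{Construction of $u_{\delta,N}$.} With these two bounds, \eqref{ill-duhamel-est-2} shows that $v\mapsto u^{(1)}+\mathcal{D}[v,v]$ is a contraction on the ball of radius $R\sim \delta/\sqrt N$ in the $\widetilde{L^N}$-component of $X_N$: the Picard contraction factor is $\sqrt N\cdot(\delta/\sqrt N)=\delta$, which is small uniformly in $N$ once $\delta\leqslant \delta_1$ is small. The resulting solution $u_{\delta,N}\in X_N$ on $[0,T_N]$ satisfies $\|u_{\delta,N}\|_{\widetilde{L^N}(\dB^{\frac{2}{p}-1+\frac{2}{N}}_{p,2})}\lesssim \delta/\sqrt N$, and feeding this back through \eqref{ill-duhamel-est-1} yields $\|u_{\delta,N}-u^{(1)}\|_{\widetilde{L^\infty}(\dB^{\frac{2}{p}-1}_{p,1})}\le CN(\delta/\sqrt N)^2 + C\delta^2 \lesssim \delta^2$, which is the upper bound in \eqref{ill-est}.

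\textbf{Lower bound at $T_N$ (main obstacle).} The critical step is to prove $\|u^{(2)}(T_N)\|_{\dB^{\frac{2}{p}-1}_{p,1}}\gtrsim \delta^2$ for the second Picard iterate $u^{(2)}:=\mathcal{D}[u^{(1)},u^{(1)}]$. I would compute $u^{(1)}\otimes u^{(1)}$ in Fourier variables and isolate the low-frequency resonance: the $\binom{N}{2}$ pairwise products of the carrier frequencies $M_k,M_\ell$ yield modes at the beat frequencies $|M_k-M_\ell|$, the smallest being $\sim 2^{-N}$, so that $e^{T_N\Delta}$ preserves an order-one fraction of that mode because $2^{-2N}\cdot T_N = 1$. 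A careful evaluation, exploiting that the Leray projector $\mathbb{P}$ does not annihilate these resonant modes (which is the reason for the $\nabla^\perp$ structure in \eqref{rem:force}) and summing the contributions over dyadic shells down to $|\xi|\sim 2^{-N}$, yields $\|u^{(2)}(T_N)\|_{\dB^{\frac{2}{p}-1}_{p,1}}\geqslant c\delta^2$. Combined with $\|u^{(1)}(T_N)\|_{\dB^{\frac{2}{p}-1}_{p,1}}\lesssim \delta/\sqrt N\to 0$ and the correction bound $\|u_{\delta,N}-u^{(1)}-u^{(2)}\|_{\widetilde{L^\infty}(\dB^{\frac{2}{p}-1}_{p,1})}\lesssim \delta^3$ (again via \eqref{ill-duhamel-est-1}), this delivers the lower bound in \eqref{ill-est}. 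The main technical difficulty is the precise resonance bookkeeping: one must check that the sum over pairs $(k,\ell)$ contributes $\ell^1$-amplitude of order $\delta^2$ at the final time, neither washed out by dissipation nor cancelled by phase incoherence; this is the genuinely two-dimensional ingredient of the argument.
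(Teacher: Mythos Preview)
Your iteration framework is correct and matches the paper: decompose $u_{\delta,N}=u^{(1)}+u^{(2)}+w$, close the contraction for $w$ in $X_N=\widetilde{L^\infty}(\dB_{p,1}^{2/p-1})\cap\widetilde{L^N}(\dB_{p,2}^{2/p-1+2/N})$ via Lemma~\ref{lemm:ill-duhamel-est}, and extract the lower bound from $u^{(2)}(T_N)$. The point where your proposal diverges from the paper --- and where it runs into a genuine problem --- is the construction of $F_{\delta,N}$ and the mechanism producing the $\delta^2$ lower bound.

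The paper does \emph{not} use $N$ wave packets. It takes a \emph{single} high-frequency packet $F_{\delta,N}=-\Delta\widetilde F_{\delta,N}$ with $\widetilde F_{\delta,N}=\frac{\delta}{\sqrt N}\nabla^\perp(\Psi(x)\cos(Mx_1))$ for one fixed $M\gg1$; the $1/\sqrt N$ is simply an explicit amplitude. The quadratic self-interaction then contains the DC term coming from $\cos^2(Mx_1)=\tfrac12(1+\cos(2Mx_1))$, yielding a contribution of the form $\frac{\delta^2}{N}M^2(-\Delta)^{-1}(1-e^{t\Delta})\mathbb{P}\,(0,\partial_{x_2}\Psi^2)^{\mathsf T}$, localized near frequency zero. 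The crucial computation is that the $\dB_{p,1}^{2/p-1}$ norm of $(-\Delta)^{-1}(1-e^{T_N\Delta})$ applied to a fixed Schwartz function at the origin is of order $\sum_{-N\leqslant j\leqslant -2}(1-e^{-2^{2(N+j)}})\sim N$: each of the $N$ low-frequency dyadic blocks down to $2^{-N}$ contributes order one at time $T_N=2^{2N}$. Multiplying by the amplitude $\delta^2/N$ gives the $c_0M^2\delta^2$ lower bound. There is no $\ell^1$--$\ell^2$ gap in the force itself, and no beat-frequency bookkeeping.

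Your proposed mechanism has an internal tension. If the $N$ packets sit at ``essentially disjoint Littlewood--Paley blocks'', their carrier frequencies $M_k$ live at different dyadic scales, so the beat frequencies $|M_k-M_\ell|$ are of order $\max(M_k,M_\ell)$ and never reach $2^{-N}$. Conversely, if the $M_k$ are close enough that $|M_k-M_\ell|\sim 2^{-N}$ for some pair, then all packets lie in the \emph{same} dyadic block and the $\ell^1$--$\ell^2$ gap is unavailable; moreover with $N$ packets each of amplitude $\delta/\sqrt N$ in that single block the $\dB_{p,1}^{2/p-3}$ norm is $\sim\sqrt N\,\delta$, not $\delta/\sqrt N$. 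Either way your bound $\|F_{\delta,N}\|_{\dB_{p,1}^{2/p-3}}\lesssim\delta/\sqrt N$ together with the resonance picture cannot hold simultaneously. The fix is to drop the multi-packet idea entirely and use the single-packet DC mechanism above; the ``$N$ dyadic shells'' appear not in $F$ but in the low-frequency output of $(-\Delta)^{-1}(1-e^{T_N\Delta})$ acting on $\Psi^2$.
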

\begin{rem}\label{rem:2D}
    For the nonstationary Navier--Stokes equations in $\mathbb{R}^n$ with $n \geqslant 3$,
    it is possible to construct a small global-in-time unique solution for small external force that is bounded-in-time but does not decay as $t \to \infty$. 
    We refer to \cites{Gei-Hie-Ngu-16,Koz-Nak-96} and references therein for the time periodic setting.
    Thus, the assertion of Theorem \ref{thm:ill} is one of phenomena inherent to two-dimensional flows.
\end{rem}
As the proof of Theorem \ref{thm:ill} is the most complicated part of this paper, we shall sketch its outline before starting on the rigorous proof.
We first follow the standard ill-posedness argument used in studies such as \cites{Bou-Pav-08,Yon-10} and formally decompose the solution $u_{\delta,N}$ as 
\begin{align}
    u_{\delta,N} = u_{\delta,N}^{(1)} + u_{\delta,N}^{(2)} + w_{\delta,N},
\end{align}
where $u_{\delta,N}^{(1)}$ and $u_{\delta,N}^{(2)}$ denote the first and second iterations, respectively, which are defined by 
\begin{align}
    u_{\delta,N}^{(1)}(t):= (-\Delta)^{-1}\left(1-e^{t\Delta}\right)\mathbb{P}F_{\delta,N},\qquad
    u_{\delta,N}^{(2)}(t):=\mathcal{D}\left[u_{\delta,N}^{(1)},u_{\delta,N}^{(1)}\right](t)
\end{align}
and $w_{\delta,N}$ is the perturbation obeying \eqref{eq:wN} below.
Then, choosing a suitable sequence $\{ F_{\delta,N} \}_{N \in \mathbb{N}} \subset \dB_{p,1}^{\frac{2}{p}-3}(\mathbb{R}^2)$, we may see that 
\begin{align}\label{ill-explain-1}
    \| F_{\delta,N} \|_{\dB_{p,1}^{\frac{2}{p}-3}}\leqslant C\frac{\delta}{\sqrt{N}},\qquad
    \left\| u_{\delta,N}^{(1)} \right\|_{\widetilde{L^{\infty}}(0,\infty;\dB_{p,1}^{\frac{2}{p}-1})} \leqslant C \frac{\delta}{\sqrt{N}},
\end{align}
whereas the second iteration satisfies
\begin{align}\label{ill-explain-2}
    \left\| u_{\delta,N}^{(2)}(T_N) \right\|_{\dB_{p,1}^{\frac{2}{p}-1}}\geqslant c\delta^2,\qquad
    \left\| u_{\delta,N}^{(2)} \right\|_{\widetilde{L^{\infty}}(0,T_N;\dB_{p,1}^{\frac{2}{p}-1})} \leqslant C\delta^2
\end{align}
for sufficiently large $N$.
It is relatively easy to obtain \eqref{ill-explain-1} and \eqref{ill-explain-2}, 
while the most difficult part of the proof is how to construct and control the perturbation $w_{\delta,N}$.
To this end, we consider the estimate of $w_{\delta,N}$ in 
\begin{align}
    \widetilde{C}( [0,T_N] ; \dB_{p,1}^{\frac{2}{p}-1}(\mathbb{R}^2)) \cap \widetilde{L^N}( 0,T_N ; \dB_{p,2}^{\frac{2}{p}-1+\frac{2}{N}}(\mathbb{R}^2)).
\end{align}
Here, the choice of the auxiliary space $\widetilde{L^N}( 0,T_N ; \dB_{p,2}^{\frac{2}{p}-1+\frac{2}{N}}(\mathbb{R}^2))$ is the most crucial idea of the proof.
Indeed, choosing the Lebesgue exponent of the time integral as $N$, we see that the $L^N(0,T_N)$-norm of functions are bounded by the $L^{\infty}(0,T_N)$-norm with the constant independent of $N$.
More precisely, it holds
\begin{align}
    \| f \|_{L^N(0,T_N)}
    \leqslant
    T_N^\frac{1}{N}
    \| f \|_{L^{\infty}(0,T_N)}
    =
    4
    \| f \|_{L^{\infty}(0,T_N)}
\end{align}
for all $f \in L^{\infty}(0,T_N)$.
On the other hand, choosing the interpolation index as $q=2$ in the auxiliary Chemin--Lerner space $\widetilde{L^N}( 0,T_N ; \dB_{p,2}^{\frac{2}{p}-1+\frac{2}{N}}(\mathbb{R}^2))$, we may use a pair of estimates \eqref{ill-duhamel-est-1} and \eqref{ill-duhamel-est-2} in Lemma \ref{lemm:ill-duhamel-est} above.
Then, keeping these facts in mind and making use of the iterative argument via Lemma \ref{lemm:ill-duhamel-est}, we may obtain the existence of the perturbation $w_{\delta,N}$ and the estimate
\begin{align}\label{ill-explain-3}
    \| w_{\delta,N} \|_{\widetilde{L^{\infty}}(0,T_N;\dB_{p,1}^{\frac{2}{p}-1})}
    \leqslant
    C\delta^3,\qquad
    \| w_{\delta,N} \|_{\widetilde{L^N}(0,T_N;\dB_{p,2}^{\frac{2}{p}-1+\frac{2}{N}})}
    \leqslant
    C\frac{\delta^3}{\sqrt{N}}
\end{align}
for sufficiently small $\delta$.
Collecting \eqref{ill-explain-1}, \eqref{ill-explain-2}, and \eqref{ill-explain-3}, we obtain the solution $u_{\delta,N}$ satisfying the desired estimate \eqref{ill-est}.

Now, the rigorous proof of Theorem \ref{thm:ill} reads as follows.
\begin{proof}[Proof of Theorem \ref{thm:ill}]
We split the proof into five parts. 
In the first step, we provide the definition and an estimate for the sequence of the external forces.
In the second and third steps, we establish some estimates on the first and second iterations, respectively.
In the fourth step, we construct the remaining part of the solution and prepare it's estimates. 
In the final step, we make use of various estimates established in the previous steps and complete the proof. 

\noindent
{\it Step.1 The definition and estimate for the sequence of external forces.}
Let $N \geqslant 3$ be an integer, and let $0< \delta \leqslant 1$.
We choose a function $\Psi \in \mathscr{S}(\mathbb{R}^2)$ satisfying
\begin{align}\label{Psi}
    \begin{dcases}
    \widehat{\Psi}{\rm\ is\ radial\ symmetric,}\\
    0 \leqslant \widehat{\Psi}(\xi) \leqslant 1, \\
    \supp \widehat{\Psi} \subset \{ \xi \in \mathbb{R}^2 \ ;\ |\xi| \leqslant 2 \},\\
    \widehat{\Psi}(\xi) =1 \quad {\rm for\ all\ }\xi \in \mathbb{R}^2{\rm \ with\ }|\xi|\leqslant 1.
    \end{dcases}
\end{align}
We define the external force $F_{\delta,N}$ as
\begin{align}\label{df-FN}
    F_{\delta,N}:=-\Delta \widetilde{F}_{\delta,N},\qquad
    \widetilde{F}_{\delta,N}
    := \frac{\delta}{\sqrt{N}}\nabla^{\perp}\left(\Psi(x)\cos(Mx_1)\right),
\end{align}
where $M \geqslant 10$ is a positive constant to be determined later.
We note that $F_{\delta,N}$ is a real valued function satisfying $\div F_{\delta,N}=0$.
Here, since
\begin{align}
    \mathscr{F}[\Psi(x)\cos(Mx_1)](\xi)
    =
    \frac{\widehat{\Psi}(\xi+Me_1) + \widehat{\Psi}(\xi-Me_1)}{2},
\end{align}
it holds
\begin{align}\label{supp-psi}
    \supp \widehat{\widetilde{F}_{\delta,N}} \subset \{ \xi \in \mathbb{R}^2\ ;\ M-2 \leqslant |\xi| \leqslant M+2 \}.
\end{align}
Thus, we easily see that
\begin{align}\label{est-FN}
    \| F_{\delta,N} \|_{\dB_{p,1}^{\frac{2}{p}-3}}
    \leqslant
    C 
    \left\| \widetilde{F}_{\delta,N} \right\|_{\dB_{p,1}^{\frac{2}{p}-1}}
    \leqslant
    CM^{\frac{2}{p}}
    \frac{\delta}{\sqrt{N}}.
\end{align}

\noindent
{\it Step.2 The estimates for the first iteration.}
Let $u_{\delta,N}^{(1)}$ be the first iteration defined by 
\begin{align}\label{df-u1}
    \begin{split}
    u_{\delta,N}^{(1)}(t)
    :=
    (-\Delta)^{-1}\left(1-e^{t\Delta}\right)\mathbb{P}F_{\delta,N}
    =\left(1-e^{t\Delta}\right)\widetilde{F}_{\delta,N}
    \end{split}
\end{align}
Then, it follows from Lemma \ref{lemm:max-reg}, \eqref{est-FN}, and \eqref{df-u1} that
\begin{align}\label{est-u(1)-1}
    \left\| u_{\delta,N}^{(1)} \right\|_{\widetilde{L^{\infty}}(0,\infty;\dB_{p,1}^{\frac{2}{p}-1})}
    \leqslant
    C\left\|\widetilde{F}_{\delta,N}\right\|_{\dB_{p,1}^{\frac{2}{p}-1}}
    \leqslant
    CM^{\frac{2}{p}}\frac{\delta}{\sqrt{N}}
\end{align}
and
\begin{align}\label{est-u(1)-2}
    \begin{split}
    \left\| u_{\delta,N}^{(1)} \right\|_{\widetilde{L^N}(0,T_N;\dB_{p,2}^{\frac{2}{p}-1+\frac{2}{N}})}
    \leqslant{}&
    T_N^{\frac{1}{N}}
    \left\|\widetilde{F}_{\delta,N}\right\|_{\dB_{p,1}^{\frac{2}{p}-1+\frac{2}{N}}}
    +
    \left\| e^{t\Delta}\widetilde{F}_{\delta,N} \right\|_{\widetilde{L^N}(0,T_N;\dB_{p,2}^{\frac{2}{p}-1+\frac{2}{N}})}\\
    \leqslant{}&
    CM^{\frac{2}{p}+\frac{2}{N}}\frac{\delta}{\sqrt{N}}
    +
    C\left\|\widetilde{F}_{\delta,N}\right\|_{\dB_{p,1}^{\frac{2}{p}-1}}\\
    \leqslant{}&
    CM^{\frac{2}{p}+1}\frac{\delta}{\sqrt{N}}.
    \end{split}
\end{align}
Here, we have used $T_N^{\frac{1}{N}}=4$.


\noindent
{\it Step.3 The estimates for the second iteration.}
Next, we consider the second iteration:
\begin{align}
    u_{\delta,N}^{(2)}(t)
    :=
    \mathcal{D}\left[u_{\delta,N}^{(1)},u_{\delta,N}^{(1)}\right](t)
    =
    -\int_0^t e^{(t-\tau)\Delta}\mathbb{P}\div \left(u_1^{(1)}(\tau) \otimes u_1^{(1)}(\tau)\right) d\tau.
\end{align}
We decompose $u_{\delta,N}^{(2)}$ as 
\begin{align}
    u_{\delta,N}^{(2)}
    ={}
    \mathcal{D}\left[\left(1-e^{\tau \Delta}\right)\widetilde{F}_{\delta,N},\left(1-e^{\tau \Delta}\right)\widetilde{F}_{\delta,N}\right]
    ={}
    u_{\delta,N}^{(2,1)}+u_{\delta,N}^{(2,2)},
\end{align}
where
\begin{align}
    u_{\delta,N}^{(2,1)}
    :={}&
    \mathcal{D}\left[\widetilde{F}_{\delta,N},\widetilde{F}_{\delta,N}\right],\\
    u_{\delta,N}^{(2,2)}
    :={}&
    -
    \mathcal{D}\left[e^{\tau \Delta}\widetilde{F}_{\delta,N},\widetilde{F}_{\delta,N}\right]
    -
    \mathcal{D}\left[\widetilde{F}_{\delta,N},e^{\tau \Delta}\widetilde{F}_{\delta,N}\right]
    +
    \mathcal{D}\left[e^{\tau \Delta}\widetilde{F}_{\delta,N},e^{\tau \Delta}\widetilde{F}_{\delta,N}\right].
\end{align}
We focus on the estimate of $u_{\delta,N}^{(2,1)}$. 
We note that it holds
\begin{align}
    u_{\delta,N}^{(2,1)}
    =
    -
    (-\Delta)^{-1}
    \left(1-e^{t\Delta}\right)
    \mathbb{P}\div\left(\widetilde{F}_{\delta,N} \otimes \widetilde{F}_{\delta,N}\right).
\end{align}
By the direct calculation (see \cite{Fujii-pre}*{Lemma 2.1} for details), there holds
\begin{align}\label{direct}
    \begin{split}
    \Delta_j
    u_{\delta,N}^{(2,1)}(t)
    &={}
    -
    M^2\frac{\delta^2}{2N}
    \Delta_j
    (-\Delta)^{-1}
    \left(1-e^{t\Delta}\right)
    \mathbb{P}
    \begin{pmatrix}
        0 \\ \partial_{x_2}(\Psi^2)
    \end{pmatrix}\\
    &\qquad 
    -
    \frac{\delta^2}{2N}
    \Delta_j
    (-\Delta)^{-1}
    \left(1-e^{t\Delta}\right)
    \mathbb{P}
    \div\left(\nabla^{\perp}\Psi \otimes \nabla^{\perp}\Psi\right)\\
    &=:{}
    \Delta_j
    u_{\delta,N}^{(2,1,1)}(t)
    +
    \Delta_j
    u_{\delta,N}^{(2,1,2)}(t)
    \end{split}
\end{align}
for $j \in \mathbb{Z}$ with $j \leqslant 0$.
Let
\begin{align}\label{Aj}
    A_j
    :=
    \left\{
    \xi \in \mathbb{R}^2 \ ; \ 
    2^{j-1} \leqslant |\xi| \leqslant 2^{j+1},\
    \frac{|\xi|}{2} \leqslant |\xi_2| \leqslant \frac{|\xi|}{\sqrt{2}} 
    \right\}.
\end{align}
The Fourier transform of $u_{\delta,N}^{(2,1,1)}(t)$ is estimated as
\begin{align}\label{est_below-1}
    \left|
    \mathscr{F}
    \left[
    \Delta_j
    u_{\delta,N}^{(2,1,1)}(t)
    \right](\xi)
    \right|
    \geqslant{}&
    \left|
    \mathscr{F}
    \left[
    \Delta_j
    \left(u_{\delta,N}^{(2,1,1)}(t)\right)_2
    \right](\xi)
    \right|\\
    ={}&
    M^2
    \frac{\delta^2}{2N}
    \frac{1-e^{-t|\xi|^2}}{|\xi|^2}
    \left(
    1-\frac{\xi_2^2}{|\xi|^2}
    \right)
    |\xi_2|
    \widehat{\Phi_0}(2^{-j}\xi)
    \left(\widehat{\Psi}*\widehat{\Psi}\right)(\xi)\\
    \geqslant{}&
    c
    M^2 
    \frac{\delta^2}{N}
    \cdot 
    \frac{1-e^{-\frac{1}{4}t2^{2j}}}{2^j}
    \widehat{\Phi_0}(2^{-j}\xi)
    \left(\widehat{\Psi}*\widehat{\Psi}\right)(\xi)
\end{align}
for $\xi \in A_j$,
where $(u_{\delta,N}^{(2,1,1)}(t))_2$ denotes the second component of $u_{\delta,N}^{(2,1,1)}(t)$.
Thus, it holds by the Bernstein inequality and the Plancherel theorem that
\begin{align}\label{est_below-2}
    \begin{split}
    &
    2^{(\frac{2}{p}-1)j}
    \left\| \Delta_j u_{\delta,N}^{(2,1,1)}(T_N) \right\|_{L^p}\\
    &\quad\geqslant{}
    c
    \left\|
    \mathscr{F}
    \left[
    \Delta_j
    u_{\delta,N}^{(2,1,1)}(T_N)
    \right]
    \right\|_{L^2}\\
    &\quad\geqslant{}
    c
    M^2 
    \frac{\delta^2}{N}
    \cdot 
    \frac{1-e^{-\frac{1}{4}T_N2^{2j}}}{2^j}
    \left\| 
    \widehat{\Phi_0}(2^{-j}\xi)
    \left(\widehat{\Psi}*\widehat{\Psi}\right)(\xi)
    \right\|_{L^2_{\xi}(A_j)}\\
    &\quad={}
    c
    M^2 
    \frac{\delta^2}{N}
    \left(1-e^{-2^{2(N+j-1)}}\right)
    \left\| 
    \widehat{\Phi_0}(\eta)
    \left(\widehat{\Psi}*\widehat{\Psi}\right)(2^j\eta)
    \right\|_{L^2_{\eta}(A_0)}.
    \end{split}
\end{align}
Here, we have changed the variables $\eta=2^{-j}\xi$ in the last line of \eqref{est_below-2}.
Since $\widehat{\Psi}(2^j\eta-\mu)=\widehat{\Psi}(\mu)=1$ for all $\eta \in A_0$, $\mu$ with $|\mu| \leqslant 1/2$ and $j \leqslant -2$, we have
\begin{align}\label{est_below-3}
    \left(\widehat{\Psi}*\widehat{\Psi}\right)(2^j\eta)
    =
    \int_{\mathbb{R}^2}
    \widehat{\Psi}(2^j\eta-\mu)\widehat{\Psi}(\mu)d\mu
    \geqslant
    \int_{|\mu| \leqslant \frac{1}{2}}
    d\mu=c>0
\end{align}
for $j \leqslant -2$, which implies
\begin{align}\label{est_below-4}
    \inf_{j \leqslant -2}
    \left\| 
    \widehat{\Phi_0}(\eta)
    \left(\widehat{\Psi}*\widehat{\Psi}\right)(2^j\eta)
    \right\|_{L^2_{\eta}(A_0)}
    \geqslant
    c
    \left\| 
    \widehat{\Phi_0}
    \right\|_{L^2(A_0)}>0.
\end{align}
Hence, we obtain by \eqref{est_below-2} and \eqref{est_below-4} that
\begin{align}\label{est-u(211)-below}
    \begin{split}
    \left\| u_{\delta,N}^{(2,1,1)}(T_N) \right\|_{\dB_{p,1}^{\frac{2}{p}-1}}
    \geqslant{}&
    \sum_{-N \leqslant j \leqslant -2}
    2^{(\frac{2}{p}-1)j}
    \left\|
    \Delta_j
    u_{\delta,N}^{(2,1,1)}(T_N)
    \right\|_{L^p}\\
    \geqslant{}&
    cM^2\frac{\delta^2}{N}\sum_{-N \leqslant j \leqslant -2}\left(1-e^{-2^{2(N+j-1)}}\right)
    \left\| 
    \widehat{\Phi_0}
    \right\|_{L^2(A_0)}\\
    \geqslant{}&
    c_0M^2\delta^2
    \end{split}
\end{align}
for some positive constant $c_0=c_0(p,\Psi)$.
For the estimate of $u_{\delta,N}^{(2,1,2)}$, 
using 
\begin{align}
    u_{\delta,N}^{(2,1,2)}(t)
    =
    -
    \frac{\delta^2}{2N}
    (-\Delta)^{-1}
    \left(1-e^{t\Delta}\right)
    \mathbb{P}
    \div\left(\nabla^{\perp}\Psi \otimes \nabla^{\perp}\Psi\right)
    =
    \frac{\delta^2}{2N}
    \mathcal{D}\left[\nabla^{\perp}\Psi,\nabla^{\perp}\Psi\right](t)
\end{align}
Lemma \ref{lemm:ill-duhamel-est},
we have
\begin{align}\label{est-u(212)}
    \left\| u_{\delta,N}^{(2,1,2)}\right\|_{\widetilde{L^{\infty}}(0,T_N;\dB_{p,1}^{\frac{2}{p}-1})}
    \leqslant{}&
    C
    \delta^2
    \n{ \nabla^{\perp} \Psi}_{\widetilde{L^N}(0,T_N;\dB_{p,1}^{\frac{2}{p}-1+\frac{2}{N}})}^2
    +
    C 
    \frac{\delta^2}{N}
    \n{ \nabla^{\perp} \Psi}_{\dB_{p,1}^{\frac{2}{p}-1}}^2\\
    \leqslant{}&
    C
    \delta^2
    T_N^{\frac{1}{N}}
    \| \Psi \|_{\dB_{p,1}^{\frac{2}{p}+\frac{2}{N}}}^2
    +
    C 
    \frac{\delta^2}{N}
    \| \Psi \|_{\dB_{p,1}^{\frac{2}{p}}}^2\\
    \leqslant{}&
    C_0\delta^2.
\end{align}
for some positive constant $C_0=C_0(p,\Psi)$.
For the estimate of $u_{\delta,N}^{(2,2)}$, 
using Lemma \ref{lemm:duhamel-est-1}, we have
\begin{align}\label{est-u(22)}
    \begin{split}
    \left\| u_{\delta,N}^{(2,2)}\right\|_{\widetilde{L^{\infty}}(0,\infty;\dB_{p,1}^{\frac{2}{p}-1})}
    \leqslant{}&
    C\| \widetilde{F}_{\delta,N} \|_{\dB_{p,1}^{\frac{2}{p}-1}}\left\| e^{t\Delta}\widetilde{F}_{\delta,N} \right\|_{\widetilde{L^2}(0,\infty;\dB_{p,1}^{\frac{2}{p}})}
    +
    C\left\| e^{t\Delta}\widetilde{F}_{\delta,N} \right\|_{\widetilde{L^2}(0,\infty;\dB_{p,1}^{\frac{2}{p}})}^2\\
    \leqslant{}&
    C\| \widetilde{F}_{\delta,N} \|_{\dB_{p,1}^{\frac{2}{p}-1}}^2\\
    \leqslant{}&
    CM^{\frac{4}{p}}\frac{\delta^2}{N}.
    \end{split}
\end{align}
We now fix $M$ so that
\begin{align}
    M:=
    \max
    \left\{
    10, 
    \sqrt{2+\frac{C_0}{c_0}}
    \right\}.
\end{align}
Then, we obtain by \eqref{est-u(211)-below}, \eqref{est-u(212)}, and \eqref{est-u(22)} that
\begin{align}\label{est-u(2)-below}
    \begin{split}
    \left\| u_{\delta,N}^{(2)}(T_N) \right\|_{\dB_{p,1}^{\frac{2}{p}-1}}
    \geqslant{}&
    \left\| u_{\delta,N}^{(2,1,1)}(T_N) \right\|_{\dB_{p,1}^{\frac{2}{p}-1}}\\
    &
    -\left\| u_{\delta,N}^{(2,1,2)}\right\|_{\widetilde{L^{\infty}}(0,T_N;\dB_{p,1}^{\frac{2}{p}-1})}
    -\left\| u_{\delta,N}^{(2,2)}\right\|_{\widetilde{L^{\infty}}(0,\infty;\dB_{p,1}^{\frac{2}{p}-1})}\\
    \geqslant{}&
    \left(M^2c_0-C_0-C\frac{M^{\frac{4}{p}}}{N}\right)\delta^2\\
    \geqslant{}&
    \left(2c_0-\frac{C}{N}\right)\delta^2.
    \end{split}
\end{align}
On the other hand, it follows from Lemma \ref{lemm:ill-duhamel-est}, \eqref{est-u(1)-1}, and \eqref{est-u(1)-2} that
\begin{align}\label{est-u(2)-1}
    \begin{split}
    \left\| u_{\delta,N}^{(2)} \right\|_{\widetilde{L^{\infty}}(0,\infty;\dB_{p,1}^{\frac{2}{p}-1})}
    \leqslant{}&
    CN
    \left\| u_{\delta,N}^{(1)} \right\|_{\widetilde{L^N}(0,T_N;\dB_{p,2}^{\frac{2}{p}-1+\frac{2}{N}})}^2
    +
    C
    \left\| u_{\delta,N}^{(1)} \right\|_{\widetilde{L^{\infty}}(0,\infty;\dB_{p,1}^{\frac{2}{p}-1})}^2\\
    \leqslant{}&
    C\delta^2+C\frac{\delta^2}{N}\\
    \leqslant{}&
    C\delta^2
    \end{split}
\end{align}
and
\begin{align}\label{est-u(2)-2}
    \left\| u_{\delta,N}^{(2)} \right\|_{\widetilde{L^N}(0,T_N;\dB_{p,2}^{\frac{2}{p}-1+\frac{2}{N}})}
    \leqslant{}
    C\sqrt{N}
    \left\| u_{\delta,N}^{(1)} \right\|_{\widetilde{L^N}(0,T_N;\dB_{p,2}^{\frac{2}{p}-1+\frac{2}{N}})}^2
    \leqslant{}
    C\frac{\delta^2}{\sqrt{N}}.
\end{align}


\noindent
{\it Step.4 The construction and estimates for the remainder part.}
To construct a solution to \eqref{eq:nNS-0} with the external force $F_{\delta,N}$, we focus on the perturbation of a solution to \eqref{eq:nNS-0} with the external force $F_{\delta,N}$ from the second approximation $u_{\delta,N}^{(1)}+u_{\delta,N}^{(2)}$.
If $u_{\delta,N}$ is a solution to \eqref{eq:nNS-0} with the external force $F_{\delta,N}$, then $w_{\delta,N}:=u_{\delta,N}-u_{\delta,N}^{(1)}-u_{\delta,N}^{(2)}$ should satisfy
\begin{align}\label{eq:wN}
    \begin{dcases}
    \begin{aligned}
    \partial_t w_{\delta,N} - \Delta w_{\delta,N} 
    +{}
    &
    \mathbb{P} \div 
    \left(
    u_{\delta,N}^{(1)} \otimes u_{\delta,N}^{(2)}
    +
    u_{\delta,N}^{(2)} \otimes u_{\delta,N}^{(1)}
    +
    u_{\delta,N}^{(2)} \otimes u_{\delta,N}^{(2)}\right.\\
    &\quad
    +
    u_{\delta,N}^{(1)} \otimes w_{\delta,N}
    +
    u_{\delta,N}^{(2)} \otimes w_{\delta,N}\\
    &\quad
    +
    w_{\delta,N} \otimes u_{\delta,N}^{(1)}
    +
    \left.
    w_{\delta,N} \otimes u_{\delta,N}^{(2)}
    +
    w_{\delta,N} \otimes w_{\delta,N} 
    \right)
    =0,
    \end{aligned}\\
    \div w_{\delta,N} = 0,\\
    w_{\delta,N}(0,x) = 0.
    \end{dcases}
\end{align}
To construct the mild solution to \eqref{eq:wN}, we consider the map
\begin{align}\label{dfSN}
    \begin{split}
    \mathcal{S}_N[w]:={}
    &
    \mathcal{D}\left[u_{\delta,N}^{(1)} , u_{\delta,N}^{(2)}\right]
    +
    \mathcal{D}\left[u_{\delta,N}^{(2)} , u_{\delta,N}^{(1)}\right]
    +
    \mathcal{D}\left[u_{\delta,N}^{(2)} , u_{\delta,N}^{(2)}\right]\\
    &
    +
    \mathcal{D}\left[u_{\delta,N}^{(1)} , w\right]
    +
    \mathcal{D}\left[u_{\delta,N}^{(2)} , w\right]
    +
    \mathcal{D}\left[w , u_{\delta,N}^{(1)}\right]
    +
    \mathcal{D}\left[w , u_{\delta,N}^{(2)}\right]\\
    &
    +
    \mathcal{D}[w , w] .
    \end{split}
\end{align}
Here, we consider the estimates for the first three terms of the right hand side of \eqref{dfSN}.
By virtue of Lemma \ref{lemm:ill-duhamel-est}, \eqref{est-u(1)-1}, \eqref{est-u(1)-2}, \eqref{est-u(2)-1}, and \eqref{est-u(2)-2}, we have
\begin{align}
    &
    \left\| \mathcal{D}\left[u_{\delta,N}^{(1)} , u_{\delta,N}^{(2)}\right] \right\|_{\widetilde{L^{\infty}}(0,T_N;\dB_{p,1}^{\frac{2}{p}-1})}
    +
    \left\| \mathcal{D}\left[u_{\delta,N}^{(2)} , u_{\delta,N}^{(1)}\right] \right\|_{\widetilde{L^{\infty}}(0,T_N;\dB_{p,1}^{\frac{2}{p}-1})}
    \\
    &\quad 
    \leqslant
    CN
    \left\| u_{\delta,N}^{(1)} \right\|_{\widetilde{L^N}(0,T_N;\dB_{p,2}^{\frac{2}{p}-1+\frac{2}{N}})}
    \left\| u_{\delta,N}^{(2)} \right\|_{\widetilde{L^N}(0,T_N;\dB_{p,2}^{\frac{2}{p}-1+\frac{2}{N}})}\\
    &\qquad
    +
    C
    \left\| u_{\delta,N}^{(1)} \right\|_{\widetilde{L^{\infty}}(0,\infty;\dB_{p,1}^{\frac{2}{p}-1})}
    \left\| u_{\delta,N}^{(2)} \right\|_{\widetilde{L^{\infty}}(0,\infty;\dB_{p,1}^{\frac{2}{p}-1})}\\
    &\quad 
    \leqslant
    C\delta^3,\\
    &
    \begin{aligned}
    \left\| \mathcal{D}\left[u_{\delta,N}^{(2)} , u_{\delta,N}^{(2)}\right] \right\|_{\widetilde{L^{\infty}}(0,T_N;\dB_{p,1}^{\frac{2}{p}-1})}
    &\leqslant 
    CN
    \left\| u_{\delta,N}^{(2)} \right\|_{\widetilde{L^N}(0,T_N;\dB_{p,2}^{\frac{2}{p}-1+\frac{2}{N}})}^2
    +
    C
    \left\| u_{\delta,N}^{(2)} \right\|_{\widetilde{L^{\infty}}(0,\infty;\dB_{p,1}^{\frac{2}{p}-1})}^2\\
    &
    \leqslant
    C\delta^4
    \end{aligned}
\end{align}
and
\begin{align}
    &
    \left\| \mathcal{D}\left[u_{\delta,N}^{(1)} , u_{\delta,N}^{(2)}\right] \right\|_{\widetilde{L^N}(0,T_N;\dB_{p,2}^{\frac{2}{p}-1+\frac{2}{N}})}
    +
    \left\| \mathcal{D}\left[u_{\delta,N}^{(2)} , u_{\delta,N}^{(1)}\right] \right\|_{\widetilde{L^N}(0,T_N;\dB_{p,2}^{\frac{2}{p}-1+\frac{2}{N}})}\\
    &\quad 
    \leqslant{}
    C\sqrt{N}
    \left\| u_{\delta,N}^{(1)} \right\|_{\widetilde{L^N}(0,T_N;\dB_{p,2}^{\frac{2}{p}-1+\frac{2}{N}})}
    \left\| u_{\delta,N}^{(2)} \right\|_{\widetilde{L^N}(0,T_N;\dB_{p,2}^{\frac{2}{p}-1+\frac{2}{N}})}\\
    &\quad
    \leqslant{}
    C\frac{\delta^3}{\sqrt{N}},\\
    &
    \begin{aligned}
    \left\| \mathcal{D}\left[u_{\delta,N}^{(2)} , u_{\delta,N}^{(2)}\right] \right\|_{\widetilde{L^N}(0,T_N;\dB_{p,2}^{\frac{2}{p}-1+\frac{2}{N}})}
    &\leqslant{}
    C\sqrt{N}
    \left\| u_{\delta,N}^{(2)} \right\|_{\widetilde{L^N}(0,T_N;\dB_{p,2}^{\frac{2}{p}-1+\frac{2}{N}})}^2\\
    &
    \leqslant{}
    C\frac{\delta^4}{\sqrt{N}}.
    \end{aligned}
\end{align}
Therefore, there exists a positive constant $C_1=C_1(p,\Psi)$ such that 
\begin{align}
    &\left\| \mathcal{D}\left[u_{\delta,N}^{(1)} , u_{\delta,N}^{(2)}\right] + \mathcal{D}\left[u_{\delta,N}^{(2)} , u_{\delta,N}^{(1)}\right] + \mathcal{D}\left[u_{\delta,N}^{(2)} , u_{\delta,N}^{(2)}\right] \right\|_{\widetilde{L^{\infty}}(0,T_N;\dB_{p,1}^{\frac{2}{p}-1})}
    \leqslant
    C_1\delta^3,\\
    &
    \left\| \mathcal{D}\left[u_{\delta,N}^{(1)} , u_{\delta,N}^{(2)}\right] + \mathcal{D}\left[u_{\delta,N}^{(2)} , u_{\delta,N}^{(1)}\right] + \mathcal{D}\left[u_{\delta,N}^{(2)} , u_{\delta,N}^{(2)}\right] \right\|_{\widetilde{L^N}(0,T_N;\dB_{p,2}^{\frac{2}{p}-1+\frac{2}{N}})}
    \leqslant
    C_1 \frac{\delta^3}{\sqrt{N}}.
\end{align}
Now, we shall show that $\mathcal{S}_N[\cdot]$ is a contraction map on the complete metric space $(X_N,d_{X_N})$ defined by
\begin{align}
    &
    X_N
    :=
    \left\{
    \begin{aligned}
    w 
    \in{}& 
    \widetilde{C}([0,T_N]; \dB_{p,1}^{\frac{2}{p}-1}(\mathbb{R}^2)) \\ 
    &\cap 
    \widetilde{L^N}(0,T_N;\dB_{p,2}^{\frac{2}{p}-1+\frac{2}{N}}(\mathbb{R}^2))
    \end{aligned}\ ;\ 
    \begin{aligned}
    &\| w \|_{\widetilde{L^{\infty}}(0,T_N;\dB_{p,1}^{\frac{2}{p}-1})}
    \leqslant
    2C_1\delta^3,\\
    &\| w \|_{\widetilde{L^N}(0,T_N;\dB_{p,2}^{\frac{2}{p}-1+\frac{2}{N}})}
    \leqslant
    2C_1\frac{\delta^3}{\sqrt{N}}.
    \end{aligned}
    \right\},\\
    &
    d_{X_N}(w_1,w_2)
    :=
    \| w_1 - w_2 \|_{\widetilde{L^{\infty}}(0,T_N;\dB_{p,1}^{\frac{2}{p}-1})}
    +
    \sqrt{N}
    \| w_1 - w_2 \|_{\widetilde{L^N}(0,T_N;\dB_{p,2}^{\frac{2}{p}-1+\frac{2}{N}})}.
\end{align}
Let $w \in X_N$.
Then, it follows from Lemma \ref{lemm:ill-duhamel-est}, \eqref{est-u(1)-1}, \eqref{est-u(1)-2}, \eqref{est-u(2)-1}, and \eqref{est-u(2)-2}, that
\begin{align}
    &\left\| \mathcal{S}_N[w] \right\|_{\widetilde{L^{\infty}}(0,T_N;\dB_{p,1}^{\frac{2}{p}-1})}\\
    &\quad
    \begin{aligned}
    \leqslant{}
    C_1\delta^3
    +
    C\sum_{k=1}^2 
    &
    \left(
    N
    \left\| u_{\delta,N}^{(k)} \right\|_{\widetilde{L^N}(0,T_N;\dB_{p,2}^{\frac{2}{p}-1+\frac{2}{N}})}
    \| w \|_{\widetilde{L^N}(0,T_N;\dB_{p,2}^{\frac{2}{p}-1+\frac{2}{N}})}\right.\\
    &\quad 
    \left.+
    \left\| u_{\delta,N}^{(k)} \right\|_{\widetilde{L^{\infty}}(0,T_N;\dB_{p,1}^{\frac{2}{p}-1})}
    \| w \|_{\widetilde{L^{\infty}}(0,T_N;\dB_{p,1}^{\frac{2}{p}-1})}
    \right)
    \end{aligned}\\
    &\qquad 
    +
    CN\| w \|_{\widetilde{L^N}(0,T_N;\dB_{p,2}^{\frac{2}{p}-1+\frac{2}{N}})}^2
    +
    C\| w \|_{\widetilde{L^{\infty}}(0,T_N;\dB_{p,1}^{\frac{2}{p}-1})}^2\\
    &\quad 
    \leqslant
    C_1\delta^3
    +
    C\delta\sqrt{N}
    \| w \|_{\widetilde{L^N}(0,T_N;\dB_{p,2}^{\frac{2}{p}-1+\frac{2}{N}})}
    +
    C\delta
    \| w \|_{\widetilde{L^{\infty}}(0,T_N;\dB_{p,1}^{\frac{2}{p}-1})}\\
    &\qquad
    +
    CN\| w \|_{\widetilde{L^N}(0,T_N;\dB_{p,2}^{\frac{2}{p}-1+\frac{2}{N}})}^2
    +
    C\| w \|_{\widetilde{L^{\infty}}(0,T_N;\dB_{p,1}^{\frac{2}{p}-1})}^2\\
    &\quad
    \leqslant
    C_1\delta^3
    +
    C_2\delta^4
\end{align}
and
\begin{align}
    &\| \mathcal{S}_N[w] \|_{\widetilde{L^N}(0,T_N;\dB_{p,2}^{\frac{2}{p}-1+\frac{2}{N}})}\\
    &\quad\leqslant{}
    C_1\frac{\delta^3}{\sqrt{N}}
    +
    C\sqrt{N}
    \sum_{k=1}^2
    \left\| u_{\delta,N}^{(k)} \right\|_{\widetilde{L^N}(0,T_N;\dB_{p,2}^{\frac{2}{p}-1+\frac{2}{N}})}
    \| w \|_{\widetilde{L^N}(0,T_N;\dB_{p,2}^{\frac{2}{p}-1+\frac{2}{N}})}\\
    &\qquad
    +
    C
    \sqrt{N}
    \| w \|_{\widetilde{L^N}(0,T_N;\dB_{p,2}^{\frac{2}{p}-1+\frac{2}{N}})}^2\\
    &\quad\leqslant{}
    C_1\frac{\delta^3}{\sqrt{N}}
    +
    C\delta
    \| w \|_{\widetilde{L^N}(0,T_N;\dB_{p,2}^{\frac{2}{p}-1+\frac{2}{N}})}
    +
    C
    \sqrt{N}
    \| w \|_{\widetilde{L^N}(0,T_N;\dB_{p,2}^{\frac{2}{p}-1+\frac{2}{N}})}^2\\
    &\quad\leqslant{}
    C_1\frac{\delta^3}{\sqrt{N}}
    +
    C_2\delta\frac{\delta^3}{\sqrt{N}}
\end{align}
for some positive constant $C_2=C_2(p,\Psi)$.
Let $w_1,w_2 \in X_N$.
Then since
\begin{align}
    \mathcal{S}_N[w_1] - \mathcal{S}_N[w_2]
    ={}
    &
    \mathcal{D}\left[u_{\delta,N}^{(1)} , w_1-w_2\right]
    +
    \mathcal{D}\left[u_{\delta,N}^{(2)} , w_1-w_2\right]\\
    &
    +
    \mathcal{D}\left[w_1-w_2 , u_{\delta,N}^{(1)}\right]
    +
    \mathcal{D}\left[w_1-w_2 , u_{\delta,N}^{(2)}\right]\\
    &
    +
    \mathcal{D}[w_1 , w_1-w_2]
    +
    \mathcal{D}[w_1-w_2 , w_2],
\end{align}
we see by Lemma \ref{lemm:ill-duhamel-est} that
\begin{align}
    &\left\| \mathcal{S}_N[w_1] - \mathcal{S}_N[w_2] \right\|_{\widetilde{L^{\infty}}(0,T_N;\dB_{p,1}^{\frac{2}{p}-1})}\\
    &\quad
    \begin{aligned}
    \leqslant{}
    C\sum_{k=1}^2 
    &
    \left(
    N
    \left\| u_{\delta,N}^{(k)} \right\|_{\widetilde{L^N}(0,T_N;\dB_{p,2}^{\frac{2}{p}-1+\frac{2}{N}})}
    \left\| \mathcal{S}_N[w_1] - \mathcal{S}_N[w_2] \right\|_{\widetilde{L^N}(0,T_N;\dB_{p,2}^{\frac{2}{p}-1+\frac{2}{N}})}\right.\\
    &\quad 
    \left.+
    \left\| u_{\delta,N}^{(k)} \right\|_{\widetilde{L^{\infty}}(0,T_N;\dB_{p,1}^{\frac{2}{p}-1})}
    \left\| \mathcal{S}_N[w_1] - \mathcal{S}_N[w_2] \right\|_{\widetilde{L^{\infty}}(0,T_N;\dB_{p,1}^{\frac{2}{p}-1})}
    \right)
    \end{aligned}\\
    &\qquad
    \begin{aligned}
    +
    C\sum_{k=1}^2 
    &
    \left(
    N
    \left\| w_k\right\|_{\widetilde{L^N}(0,T_N;\dB_{p,2}^{\frac{2}{p}-1+\frac{2}{N}})}
    \left\| \mathcal{S}_N[w_1] - \mathcal{S}_N[w_2] \right\|_{\widetilde{L^N}(0,T_N;\dB_{p,2}^{\frac{2}{p}-1+\frac{2}{N}})}\right.\\
    &\quad 
    \left.+
    \left\| w_k \right\|_{\widetilde{L^{\infty}}(0,T_N;\dB_{p,1}^{\frac{2}{p}-1})}
    \left\| \mathcal{S}_N[w_1] - \mathcal{S}_N[w_2] \right\|_{\widetilde{L^{\infty}}(0,T_N;\dB_{p,1}^{\frac{2}{p}-1})}
    \right)
    \end{aligned}\\\\
    &\quad 
    \leqslant
    C_3\delta\sqrt{N}
    \| w_1-w_2 \|_{\widetilde{L^N}(0,T_N;\dB_{p,2}^{\frac{2}{p}-1+\frac{2}{N}})}
    +
    C_3\delta
    \| w_1-w_2 \|_{\widetilde{L^{\infty}}(0,T_N;\dB_{p,1}^{\frac{2}{p}-1})}
\end{align}
and
\begin{align}
    &\| \mathcal{S}_N[w_1]-\mathcal{S}_N[w_2] \|_{\widetilde{L^N}(0,T_N;\dB_{p,2}^{\frac{2}{p}-1+\frac{2}{N}})}\\
    &\quad\leqslant{}
    C\sqrt{N}
    \sum_{k=1}^2
    \left\| u_{\delta,N}^{(k)} \right\|_{\widetilde{L^N}(0,T_N;\dB_{p,2}^{\frac{2}{p}-1+\frac{2}{N}})}
    \| w_1-w_2 \|_{\widetilde{L^N}(0,T_N;\dB_{p,2}^{\frac{2}{p}-1+\frac{2}{N}})}\\
    &\qquad
    +
    C
    \sqrt{N}
    \sum_{k=1}^2
    \left\| w_k \right\|_{\widetilde{L^N}(0,T_N;\dB_{p,2}^{\frac{2}{p}-1+\frac{2}{N}})}
    \| w_1-w_2 \|_{\widetilde{L^N}(0,T_N;\dB_{p,2}^{\frac{2}{p}-1+\frac{2}{N}})}\\
    &\quad\leqslant{}
    C_3
    \delta
    \| w_1-w_2 \|_{\widetilde{L^N}(0,T_N;\dB_{p,2}^{\frac{2}{p}-1+\frac{2}{N}})}
\end{align}
for some positive constant $C_3=C_3(p,\Psi)$.
Here, we choose $\delta$ so small that
\begin{align}
    0 < \delta \leqslant \delta_1:=\min\left\{ \frac{C_1}{C_2}, \frac{1}{4C_3},\frac{c_0}{2C_1}\right\}. 
\end{align}
Then, we have
\begin{align}
    &
    \| \mathcal{S}_N[w] \|_{\widetilde{L^{\infty}}(0,T_N;\dB_{p,1}^{\frac{2}{p}-1})}
    \leqslant
    2C_1\delta^3,\\
    &
    \| \mathcal{S}_N[w] \|_{\widetilde{L^N}(0,T_N;\dB_{p,2}^{\frac{2}{p}-1+\frac{2}{N}})}
    \leqslant
    2C_1\frac{\delta^3}{\sqrt{N}},\\
    &
    d_{X_N}(\mathcal{S}_N[w_1],\mathcal{S}_N[w_2])
    \leqslant
    \frac{1}{2}
    d_{X_N}(w_1,w_2),
\end{align}
which implies that $\mathcal{S}_N[\cdot]$ is a contraction map on $(X_N,d_{X_N})$.
Hence, by the Banach fixed point theorem, there exists a unique element $w_{\delta,N} \in X_N$ such that $w_{\delta,N}=\mathcal{S}_N[w_{\delta,N}]$, which means that the mild solution $w_{\delta,N}$ of \eqref{eq:wN} uniquely exists in $X_N$.

\noindent
{\it Step.5 Conclusion.}
We see that the function 
\begin{align}
    u_{\delta,N}:=u_{\delta,N}^{(1)}+u_{\delta,N}^{(2)}+w_{\delta,N} \in \widetilde{C}( [0,T_N] ; \dB_{p,1}^{\frac{2}{p}-1}(\mathbb{R}^2))
\end{align}
is a mild solution to \eqref{eq:nNS-0} with the external force $F_{\delta,N}$ and also obtain by \eqref{est-u(1)-1}, \eqref{est-u(2)-below}, and $w_{\delta,N} \in X_N$ that
\begin{align}
    \| u_{\delta,N}(T_N) \|_{\dB_{p,1}^{\frac{2}{p}-1}}
    \geqslant{}&
    \left\| u_{\delta,N}^{(2)}(T_N) \right\|_{\dB_{p,1}^{\frac{2}{p}-1}}
    -
    \left\| u_{\delta,N}^{(1)} \right\|_{\widetilde{L^{\infty}}(0,\infty;\dB_{p,1}^{\frac{2}{p}-1})}
    -
    \| w_{\delta,N} \|_{\widetilde{L^{\infty}}(0,T_N;\dB_{p,1}^{\frac{2}{p}-1})}\\
    \geqslant {}&
    \left(2c_0-\frac{C}{N}\right)\delta^2 - C\frac{\delta}{\sqrt{N}} - 2C_1\delta^3\\
    \geqslant {}&
    \left(c_0-\frac{C}{N}\right)\delta^2 - C\frac{\delta}{\sqrt{N}},
\end{align}
which yields
\begin{align}
    \liminf_{N \to \infty}
    \| u_{\delta,N}(T_N) \|_{\dB_{p,1}^{\frac{2}{p}-1}}
    \geqslant{}
    c_0 \delta^2.
\end{align}
It follows from \eqref{est-u(1)-1}, \eqref{est-u(2)-1}, and $w \in X_N$ that
\begin{align}
    &\| u_{\delta,N} \|_{\widetilde{L^{\infty}}(0,T_N;\dB_{p,1}^{\frac{2}{p}-1})}\\
    &\quad 
    \leqslant{}
    \left\| u_{\delta,N}^{(1)} \right\|_{\widetilde{L^{\infty}}(0,\infty;\dB_{p,1}^{\frac{2}{p}-1})}
    +
    \left\| u_{\delta,N}^{(2)} \right\|_{\widetilde{L^{\infty}}(0,T_N;\dB_{p,1}^{\frac{2}{p}-1})}
    +
    \| w_{\delta,N} \|_{\widetilde{L^{\infty}}(0,T_N;\dB_{p,1}^{\frac{2}{p}-1})}\\
    &\quad 
    \leqslant{}
    C\frac{\delta}{\sqrt{N}}
    +
    C\delta^2
    +
    C\delta^3,
\end{align}
which implies
\begin{align}
    \limsup_{N\to \infty}
    \| u_{\delta,N} \|_{\widetilde{L^{\infty}}(0,T_N;\dB_{p,1}^{\frac{2}{p}-1})}
    \leqslant
    C\delta^2
    +
    C\delta^3
    \leqslant
    C\delta^2.
\end{align}
Thus, we complete the proof.
\end{proof}

\subsection{Global solutions around the stationary flow}\label{subseq:1}
In contrast to the previous subsection, if we assume that the stationary problem \eqref{eq:sNS-1} possesses a solution $U$ for some external force $F$
and then consider the nonstationary Navier--Stokes equations \eqref{eq:nNS-0} with the same external force $F$ as for $U$.
Under this assumption, we may prove that \eqref{eq:nNS-0} admits a bounded-in-time global solution.
\begin{thm}\label{thm:stability}
Let $1 \leqslant p < 4$ and $1\leqslant q < \infty$.
Then, there exist a positive constant $\delta_2=\delta_2(p,q)$ and an absolute positive constant $K_2$ 
such that 
if a given external force $F \in \dB_{p,q}^{\frac{2}{p}-3}(\mathbb{R}^2)$ generates a solution $U \in \dB_{p,q}^{\frac{2}{p}-1}(\mathbb{R}^2)$ to \eqref{eq:sNS-1} satisfying
\begin{align}
    \| U \|_{\dB_{p,q}^{\frac{2}{p}-1}} \leqslant \delta_2,
\end{align}
then
\eqref{eq:nNS-0} with the same external force $F$ admits a global mild solution $u$ 
in the class
\begin{gather}\label{class-u}
    u \in \widetilde{C}([0,\infty);\dB_{p,q}^{\frac{2}{p}-1}(\mathbb{R}^2)),\qquad
    \|u\|_{ 
    \widetilde{L^{\infty}}(0,\infty;\dB_{p,q}^{\frac{2}{p}-1})}
    \leqslant
    K_2\| U \|_{\dB_{p,q}^{\frac{2}{p}-1}}.
\end{gather}
\end{thm}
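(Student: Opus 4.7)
The plan is to reduce Theorem \ref{thm:stability} to a fixed-point problem for the perturbation $v := u - U$. If $u$ is a mild solution to \eqref{eq:nNS-0} with external force $F$ and $U$ solves \eqref{eq:sNS-1} with the same $F$, then subtracting the stationary identity cancels the $F$-term, and a short formal calculation using $(v+U)\otimes(v+U) - U \otimes U = v \otimes v + v \otimes U + U \otimes v$ shows that $v$ should satisfy the mild equation
\begin{align}
v(t) = -e^{t\Delta}U + \mathcal{D}[v,v](t) + \mathcal{D}[v,U](t) + \mathcal{D}[U,v](t),
\end{align}
where $\mathcal{D}$ is the bilinear Duhamel operator from \eqref{df-duhamel}. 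Once a fixed point $v$ is produced, I would set $u := v + U$ and verify that it is the desired mild solution.

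To close the nonlinear estimates via Lemma \ref{lemm:duhamel-est-1}, I would fix an auxiliary time exponent $r$ with $2 < r < \infty$ and $1/r > \max\{0, 1-2/p\}$; such an $r$ exists precisely when $1 \leqslant p < 4$ (any $r > 2$ works if $p \leqslant 2$, while for $2 < p < 4$ one takes $r \in (2, p/(p-2))$). The working space would be
\begin{align}
X := \widetilde{L^{\infty}}(0,\infty; \dB_{p,q}^{\frac{2}{p}-1}(\mathbb{R}^2)) \cap \widetilde{L^{r}}(0,\infty; \dB_{p,q}^{\frac{2}{p}-1+\frac{2}{r}}(\mathbb{R}^2)),
\end{align}
normed by the sum of the two component norms.

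The linear part is immediate from Lemma \ref{lemm:max-reg}, yielding $\| e^{t\Delta}U \|_{X} \leqslant C \| U \|_{\dB_{p,q}^{2/p-1}}$. For the quadratic term, applying Lemma \ref{lemm:duhamel-est-1} with $r_1 = r_2 = r$ and with $r_0$ taken to be $r$ and $\infty$ in turn gives $\| \mathcal{D}[v_1, v_2] \|_{X} \leqslant C \| v_1 \|_{X} \| v_2 \|_{X}$. The key observation for the Oseen-type terms is that $U$ is time-independent, so $\| U \|_{\widetilde{L^{\infty}}(0,\infty; \dB_{p,q}^{2/p-1})} = \| U \|_{\dB_{p,q}^{2/p-1}}$; applying Lemma \ref{lemm:duhamel-est-1} with $r_1 = r$ and $r_2 = \infty$ then delivers $\| \mathcal{D}[v, U] \|_{X} + \| \mathcal{D}[U, v] \|_{X} \leqslant C \| v \|_{X} \| U \|_{\dB_{p,q}^{2/p-1}}$. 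With these three bounds in place, a standard Banach fixed-point argument on a closed ball of radius $O( \| U \|_{\dB_{p,q}^{2/p-1}})$ in $X$ produces a unique fixed point $v$ provided $\| U \|_{\dB_{p,q}^{2/p-1}} \leqslant \delta_2$ with $\delta_2$ sufficiently small. Continuity in time of $u = v + U$ into $\dB_{p,q}^{2/p-1}$ then follows from the standard continuity of the heat semigroup on Besov spaces (using $q < \infty$) together with the mild formulation.

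The main obstacle is the interplay between the two conditions $r > 2$ (required by Lemma \ref{lemm:duhamel-est-1}) and $1/r > 1 - 2/p$ (required so that the scaling works for the quadratic and Oseen terms together). These force $r$ into the narrow window $(2, p/(p-2))$ when $p > 2$, which is nonempty precisely when $p < 4$; this is what drives the upper bound on $p$ in the statement. Choosing $r$ inside this window is essential because it must accommodate both the $\mathcal{D}[v,v]$ estimate and the linear-in-$v$ Oseen estimates simultaneously in the same Chemin--Lerner framework.
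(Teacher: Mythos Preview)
Your proposal is correct and follows essentially the same route as the paper: reduce to the perturbation equation for $v=u-U$, work in $\widetilde{L^{\infty}}(0,\infty;\dB_{p,q}^{\frac{2}{p}-1})\cap\widetilde{L^{r}}(0,\infty;\dB_{p,q}^{\frac{2}{p}-1+\frac{2}{r}})$ with $r$ chosen so that $\max\{0,1-\tfrac{2}{p}\}<\tfrac{1}{r}<\tfrac{1}{2}$, control the linear term by Lemma~\ref{lemm:max-reg} and the bilinear and Oseen terms by Lemma~\ref{lemm:duhamel-est-1} (with $(r_1,r_2)=(r,r)$ and $(r,\infty)$ respectively), and close by a contraction argument. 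Your explanation of why the window for $r$ is nonempty precisely when $p<4$ is also exactly the mechanism behind the hypothesis in the paper.
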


Assuming the existence of the stationary solution, we consider the perturbation $v=u-U$, which should solve
\begin{align}\label{eq:nNS-2}
    \begin{cases}
    \partial_t v - \Delta v + \mathbb{P}\div( U \otimes v + v \otimes U + v \otimes v ) = 0, & \quad t>0,x \in \mathbb{R}^2,\\
    \div v = 0, & \quad t \geqslant 0,x \in \mathbb{R}^2,\\
    v(0,x) = - U(x), & \quad x \in \mathbb{R}^2,
    \end{cases}
\end{align}
then \eqref{eq:nNS-2} possesses no external force that does not decay as $t \to \infty$, which implies that the solution $v$ of \eqref{eq:nNS-2} is expected to decay as $t \to \infty$ and belong to some time integrable function spaces. 
Since the nonlinear estimate is closed in 
\begin{align}\label{class}
    \widetilde{C}([0,\infty);\dB_{p,q}^{\frac{2}{p}-1}(\mathbb{R}^2))\cap \widetilde{L^r}(0,\infty;\dB_{p,q}^{\frac{2}{p}-1+\frac{2}{r}}(\mathbb{R}^2))
\end{align}
for some $2 < r < \infty$ (see Lemma \ref{lemm:duhamel-est-1}), we may establish the global solution $v$ to \eqref{eq:nNS-2} in the class \eqref{class}.
We then obtain the desired solution by $u:=v+U$.

Now, we provide the precise proof as follows.
\begin{proof}[Proof of Theorem \ref{thm:stability}]
We first construct a mild solution $v$ of \eqref{eq:nNS-2} solving the following integral equation:
\begin{align}\label{int:nNS-2}
    v(t) 
    =
    -e^{t\Delta}U
    +
    \mathcal{D}[U,v](t)
    +
    \mathcal{D}[v,U](t)
    +
    \mathcal{D}[v,v](t),
\end{align}
where the nonlinear term $\mathcal{D}[\cdot , \cdot ]$ is defined in
\eqref{df-duhamel}.
To this end, we focus on the map 
\begin{align}
    \mathcal{S}[v](t)
    :=
    -e^{t\Delta}U
    +
    \mathcal{D}[U,v](t)
    +
    \mathcal{D}[v,U](t)
    +
    \mathcal{D}[v,v](t)
\end{align}
and shall show that $\mathcal{S}[\cdot]$ is a contraction map on the complete metric space $(X,d_X)$ defined by
\begin{align}
    &X:={}
    \left\{
    \begin{aligned}
    &
    v 
    \in 
    \widetilde{C}([0,\infty);\dB_{p,q}^{\frac{2}{p}-1}(\mathbb{R}^2)) 
    \cap 
    \widetilde{L^r}(0,\infty;\dB_{p,q}^{\frac{2}{p}-1+\frac{2}{r}}(\mathbb{R}^2))\ ;\\
    &\qquad \qquad \qquad
    \|v\|_{ 
    \widetilde{L^{\infty}}(0,\infty;\dB_{p,q}^{\frac{2}{p}-1})\cap \widetilde{L^r}(0,\infty;\dB_{p,q}^{\frac{2}{p}-1+\frac{2}{r}})}
    \leqslant
    2C_4\| U \|_{\dB_{p,q}^{\frac{2}{p}-1}}
    \end{aligned}
    \right\},\\
    &d_X(v_1,v_2)
    :={}
    \|v_1-v_2\|_{\widetilde{L^r}(0,\infty;\dB_{p,q}^{\frac{2}{p}-1+\frac{2}{r}})},
\end{align}
where $r=r(p)$ is a fixed exponent satisfying
\begin{align}
    \max\left\{ 0, 1 - \frac{2}{p} \right\}
    < 
    \frac{1}{r}
    < 
    \frac{1}{2}
\end{align}
and the positive constant $C_4$ is determined by the estimate
\begin{align}
    \left\| e^{t\Delta}U \right\|_{\widetilde{L^{\infty}}(0,\infty;\dB_{p,q}^{\frac{2}{p}-1})\cap \widetilde{L^r}(0,\infty;\dB_{p,q}^{\frac{2}{p}-1+\frac{2}{r}})}
    \leqslant
    C_4
    \| U \|_{\dB_{p,q}^{\frac{2}{p}-1}},
\end{align}
which is ensured by Lemma \ref{lemm:max-reg}.
Then, it follows from Lemma \ref{lemm:duhamel-est-1} that
\begin{align}
    &
    \| \mathcal{S}[v] \|_{\widetilde{L^{\infty}}(0,\infty;\dB_{p,q}^{\frac{2}{p}-1}) \cap \widetilde{L^r}(0,\infty;\dB_{p,q}^{\frac{2}{p}-1+\frac{2}{r}})}\\
    &\quad
    \leqslant
    C_4\| U \|_{\dB_{p,q}^{\frac{2}{p}-1}}
    +
    C\| U \|_{\dB_{p,q}^{\frac{2}{p}-1}}\| v \|_{\widetilde{L^r}(0,\infty;\dB_{p,q}^{\frac{2}{p}-1+\frac{2}{r}})}
    +
    C\| v \|_{\widetilde{L^r}(0,\infty;\dB_{p,q}^{\frac{2}{p}-1+\frac{2}{r}})}^2\\
    &\quad
    \leqslant
    C_4\| U \|_{\dB_{p,q}^{\frac{2}{p}-1}}
    +
    C_5\| U \|_{\dB_{p,q}^{\frac{2}{p}-1}}\| v \|_{\widetilde{L^r}(0,\infty;\dB_{p,q}^{\frac{2}{p}-1+\frac{2}{r}})}
\end{align}
for all $v \in X$,  with some positive constant $C_5=C_5(p,q,r)$.
Since there holds
\begin{align}
    \mathcal{S}[v_1] - \mathcal{S}[v_2]
    =
    \mathcal{D}[U,v_1-v_2]
    +
    \mathcal{D}[v_1-v_2,U]
    +
    \mathcal{D}[v_1,v_1-v_2]
    +
    \mathcal{D}[v_1-v_2,v_2],
\end{align}
we have by Lemma \ref{lemm:duhamel-est-1} that
\begin{align}
    \| \mathcal{S}[v_1] - \mathcal{S}[v_2] \|_{\widetilde{L^r}(0,\infty;\dB_{p,q}^{\frac{2}{p}-1+\frac{2}{r}})}
    \leqslant{}&
    C\| U \|_{\dB_{p,q}^{\frac{2}{p}-1}}\| v_1 - v_2 \|_{\widetilde{L^r}(0,\infty;\dB_{p,q}^{\frac{2}{p}-1+\frac{2}{r}})}\\
    &+
    C\sum_{\ell=1}^2\| v_{\ell} \|_{\widetilde{L^r}(0,\infty;\dB_{p,q}^{\frac{2}{p}-1+\frac{2}{r}})}\| v_1-v_2 \|_{\widetilde{L^r}(0,\infty;\dB_{p,q}^{\frac{2}{p}-1+\frac{2}{r}})}\\
    \leqslant{}&
    C_6\| U \|_{\dB_{p,q}^{\frac{2}{p}-1}}\| v_1 - v_2 \|_{\widetilde{L^r}(0,\infty;\dB_{p,q}^{\frac{2}{p}-1+\frac{2}{r}})}
\end{align}
for all $v_1,v_2 \in X$, with some positive constant $C_6=C_6(p,q,r)$.
Now, we assume that the stationary solution $U \in \dB_{p,q}^{\frac{2}{p}-1}(\mathbb{R}^2)$ satisfies
\begin{align}
    \| U \|_{\dB_{p,q}^{\frac{2}{p}-1}}
    \leqslant
    \delta_2:= 
    \min \left\{ \frac{C_4}{C_5}, \frac{1}{2C_6} \right\}.
\end{align}
Then, we obtain 
\begin{align}
    &\| \mathcal{S}[v] \|_{\widetilde{L^{\infty}}(0,\infty;\dB_{p,q}^{\frac{2}{p}-1}) \cap \widetilde{L^r}(0,\infty;\dB_{p,q}^{\frac{2}{p}-1+\frac{2}{r}})}
    \leqslant
    2C_4\| U \|_{\dB_{p,q}^{\frac{2}{p}-1}},\\
    &\| \mathcal{S}[v_1] - \mathcal{S}[v_2] \|_{\widetilde{L^r}(0,\infty;\dB_{p,q}^{\frac{2}{p}-1+\frac{2}{r}})}
    \leqslant
    \frac{1}{2}
    \| v_1 - v_2 \|_{\widetilde{L^r}(0,\infty;\dB_{p,q}^{\frac{2}{p}-1+\frac{2}{r}})}
\end{align}
for all $v,v_1,v_2 \in X$,
which implies $\mathcal{S}[\cdot]$ is a contraction map on $(X,d_X)$.
Hence, the Banach fixed point theorem implies that there exists a unique $v \in X$ such that $v=\mathcal{S}[v]$.

Now, we put $u:=v+U$.
Then, we see that $u$ is a mild solution to \eqref{eq:nNS-0} in the class $\widetilde{C}([0,\infty);\dB_{p,q}^{\frac{2}{p}-1}(\mathbb{R}^2))$, and it holds
\begin{align}
    \| u \|_{\widetilde{L^{\infty}}(0,\infty;\dB_{p,q}^{\frac{2}{p}-1})}
    \leqslant
    \| v \|_{\widetilde{L^{\infty}}(0,\infty;\dB_{p,q}^{\frac{2}{p}-1})}
    +
    \| U \|_{\dB_{p,q}^{\frac{2}{p}-1}}
    \leqslant
    (2C_4+1)\| U \|_{\dB_{p,q}^{\frac{2}{p}-1}}.
\end{align}
Thus, we complete the proof.
\end{proof}

\section{Proof of Theorem \ref{thm:main}}\label{sec:pf}
Now, we are in a position to present the proof of our main result.
\begin{proof}[Proof of Theorem \ref{thm:main}]
Let $\delta_1$ and $\delta_2$ be the positive constants appearing in Theorems \ref{thm:ill} and \ref{thm:stability}, respectively.
Let $K_0$, $K_1$, and $K_2$ be the positive constants appearing in Lemma \ref{lemm:prod-2}, Theorem \ref{thm:ill}, and Theorem \ref{thm:stability}, respectively.
We define 
\begin{align}
    \delta_0:=\min \left\{ \delta_2, \delta_3, \frac{\delta_3^2}{2K_1K_2} \right\},
\end{align}
where $\delta_3$ is a positive constant given by
\begin{align}
    \delta_3
    :=
    \min
    \left\{
    \delta_1,\frac{1}{2K_0(K_1+K_2)}
    \right\}.
\end{align}
We consider the sequence $F_N:=F_{\delta_3,N}$, which is defined in \eqref{df-FN} with $\delta$ replaced by $\delta_3$.
Note that Theorem \ref{thm:ill} yields
\begin{align}
    \| F_N \|_{\dB_{p,1}^{\frac{2}{p}-3}}
    \leqslant 
    \frac{K_1\delta_3}{\sqrt{N}} \to 0
    \qquad
    {\rm \ as\ }N \to \infty.
\end{align}
Let us consider the nonstationary Navier--Stokes equations
\begin{align}\label{eq:nNS-3}
\begin{cases}
    \partial_t u - \Delta u + \mathbb{P}\div(u \otimes u) = \mathbb{P} F_N, \qquad & t>0, x \in \mathbb{R}^2,\\
    \div u = 0, \qquad &  t\geqslant 0, x \in \mathbb{R}^2,\\
    u(0,x)=0, \qquad & x \in \mathbb{R}^2.
\end{cases}
\end{align} 
By Theorem \ref{thm:ill}, 
there exists a $N_0=N_0(p) \in \mathbb{N}$ such that for each $N \in \mathbb{N}$ with $N \geqslant N_0$, \eqref{eq:nNS-3} possesses a solution $u_N = u_{\delta_3,N} \in \widetilde{C}([0,T_N] ; \dB_{p,1}^{\frac{2}{p}-1}(\mathbb{R}^2))$ 
satisfying
\begin{align}\label{uN-condi-2}
    \| u_N(T_N) \|_{\dB_{p,1}^{\frac{2}{p}-1}} \geqslant \frac{\delta_3^2}{K_1},\qquad
    \| u_N \|_{\widetilde{L^{\infty}}(0,T_N;\dB_{p,1}^{\frac{2}{p}-1})}\leqslant K_1\delta_3.
\end{align}
Here, we have set $T_N:=2^{2N}$.

Assume to contrary that there exist an integer $N' \geqslant N_0$ and a solution $ U_{N'}  \in \dB_{p,1}^{\frac{2}{p}-1}(\mathbb{R}^2)$ of \eqref{eq:sNS-1} with the external force  $F_{N'}$ satisfying
\begin{align}\label{U_k-1}
    \| U_{N'} \|_{\dB_{p,1}^{\frac{2}{p}-1}}
    <
    \delta_0.
\end{align}
Then, by \eqref{U_k-1} and Theorem \ref{thm:stability}, 
each
$F_{N'}$ generates a global-in-time solution $\widetilde{u}_{N'} \in \widetilde{C}([0,\infty) ; \dB_{p,1}^{\frac{2}{p}-1}(\mathbb{R}^2))$ to the nonstationary Navier--Stokes equations 
\eqref{eq:nNS-3}
satisfying 
\begin{align}\label{uN-condi-1}
    \left\| \widetilde{u}_{N'} \right\|_{\widetilde{L^{\infty}}(0,\infty;\dB_{p,1}^{\frac{2}{p}-1})}
    \leqslant
    K_2\| U_{N'} \|_{\dB_{p,1}^{\frac{2}{p}-1}}
    \leqslant
    K_2\delta_3.
\end{align}
Next, we show that 
these two solutions $\widetilde{u}_{N'}$ and $u_{N'}$ coincides on $[0,T_{N'}]$.
Since $\widetilde{u}_{N'}-u_{N'}$ enjoys
\begin{align}
    \widetilde{u}_{N'}-u_{N'}
    =
    \mathcal{D}\left[\widetilde{u}_{N'},\widetilde{u}_{N'}-u_{N'}\right]
    +
    \mathcal{D}\left[\widetilde{u}_{N'}-u_{N'}, u_{N'}\right],
\end{align}
we see by Lemma \ref{lemm:prod-2} that
\begin{align}\label{uniq-1}
    \begin{split}
    &\sup_{0\leqslant t \leqslant T_{N'}}\| \widetilde{u}_{N'}(t)-u_{N'}(t) \|_{\dB_{p,\infty}^{\frac{2}{p}-1}}\\
    &\quad
    \leqslant{}
    K_0
    \| u_{N'} \|_{\widetilde{L^{\infty}}(0,T_{N'};\dB_{p,1}^{\frac{2}{p}-1})}
    \sup_{0 \leqslant t \leqslant T_{N'}}
    \| \widetilde{u}_{N'}(t)-u_{N'}(t) \|_{\dB_{p,\infty}^{\frac{2}{p}-1}}\\
    &\qquad
    +
    K_0
    \| \widetilde{u}_{N'} \|_{\widetilde{L^{\infty}}(0,T_{N'};\dB_{p,1}^{\frac{2}{p}-1})}
    \sup_{0 \leqslant t \leqslant T_{N'}}
    \| \widetilde{u}_{N'}(t)-u_{N'}(t) \|_{\dB_{p,\infty}^{\frac{2}{p}-1}}
    \\
    &\quad
    \leqslant{}    
    K_0
    \left(
    K_1
    +
    K_2
    \right)
    \delta_3
    \sup_{0 \leqslant t \leqslant T_{N'}}
    \| \widetilde{u}_{N'}(t)-u_{N'}(t) \|_{\dB_{p,\infty}^{\frac{2}{p}-1}}
    \\
    &\quad
    \leqslant{}
    \frac{1}{2}
    \sup_{0 \leqslant t \leqslant T_{N'}}
    \| \widetilde{u}_{N'}(t)-u_{N'}(t) \|_{\dB_{p,\infty}^{\frac{2}{p}-1}},
    \end{split}
\end{align}
which implies
\begin{align}\label{uniq}
    \widetilde{u}_{N'}(t)=u_{N'}(t) \quad {\rm for\ all\ }0 \leqslant t \leqslant T_{N'}.
\end{align}
Hence, it follows from \eqref{uN-condi-2}, \eqref{uN-condi-1}, and \eqref{uniq} that
\begin{align}
    \| U_{N'} \|_{\dB_{p,1}^{\frac{2}{p}-1}}
    \geqslant{}&
    \frac{1}{K_2}
    \| \widetilde{u}_{N'} \|_{\widetilde{L^{\infty}}(0,T_{N'};\dB_{p,1}^{\frac{2}{p}-1})}\\
    \geqslant{}&
    \frac{1}{K_2}
    \| \widetilde{u}_{N'}(T_{N'}) \|_{\dB_{p,1}^{\frac{2}{p}-1}}\\
    ={}& 
    \frac{1}{K_2}
    \| u_{N'}(T_{N'}) \|_{\dB_{p,1}^{\frac{2}{p}-1}}\\
    \geqslant{}&
    \frac{\delta_3^2}{K_1K_2}\\ 
    \geqslant{}&
    2\delta_0,
\end{align}
which contradicts \eqref{U_k-1}.
Thus, we complete the proof.
\end{proof}


\noindent
{\bf Conflict of interest statement.}\\
The author has declared no conflicts of interest.

\noindent
{\bf Acknowledgements.} \\
The author was supported by Grant-in-Aid for JSPS Research Fellow, Grant Number JP20J20941.
The author would like to express his sincere gratitude to Professor Keiichi Watanabe for many valuable comments on Section \ref{sec:intro}.

\begin{bibdiv}
\begin{biblist}
\bib{Ami-84}{article}{
   author={Amick, Charles J.},
   title={Existence of solutions to the nonhomogeneous steady Navier--Stokes
   equations},
   journal={Indiana Univ. Math. J.},
   volume={33},
   date={1984},
   pages={817--830},
}
\bib{Bah-Che-Dan-11}{book}{
   author={Bahouri, Hajer},
   author={Chemin, Jean-Yves},
   author={Danchin, Rapha\"{e}l},
   title={Fourier analysis and nonlinear partial differential equations},
   series={Grundlehren der mathematischen Wissenschaften [Fundamental Principles of Mathematical Sciences]},
   volume={343},
   publisher={Springer, Heidelberg},
   date={2011},
   pages={xvi+523},
}
\bib{Bou-Pav-08}{article}{
   author={Bourgain, Jean},
   author={Pavlovi\'{c}, Nata\v{s}a},
   title={Ill-posedness of the Navier--Stokes equations in a critical space
   in 3D},
   journal={J. Funct. Anal.},
   volume={255},
   date={2008},
   pages={2233--2247},
}
\bib{Cha-Fin-61}{article}{
   author={Chang, I-Dee},
   author={Finn, Robert},
   title={On the solutions of a class of equations occurring in continuum
   mechanics, with application to the Stokes paradox},
   journal={Arch. Rational Mech. Anal.},
   volume={7},
   date={1961},
   pages={388--401},
}
\bib{Che-Ler-95}{article}{
   author={Chemin, J.-Y.},
   author={Lerner, N.},
   title={Flot de champs de vecteurs non lipschitziens et \'{e}quations de
   Navier--Stokes},
   language={French},
   journal={J. Differential Equations},
   volume={121},
   date={1995},
   pages={314--328},
}
\bib{Che-93}{article}{
   author={Chen, Zhi Min},
   title={$L^n$ solutions of the stationary and nonstationary Navier--Stokes
   equations in ${\bf R}^n$},
   journal={Pacific J. Math.},
   volume={158},
   date={1993},
   pages={293--303},
}
\bib{Cun-Oka-Tsu-22}{article}{
   author={Cunanan, Jayson},
   author={Okabe, Takahiro},
   author={Tsutsui, Yohei},
   title={Asymptotic stability of stationary Navier--Stokes flow in Besov
   spaces},
   journal={Asymptot. Anal.},
   volume={129},
   date={2022},
   pages={29--50},
}
\bib{Fin-Smi-67-1}{article}{
   author={Finn, Robert},
   author={Smith, Donald R.},
   title={On the linearized hydrodynamical equations in two dimensions},
   journal={Arch. Rational Mech. Anal.},
   volume={25},
   date={1967},
   pages={1--25},
}
\bib{Fin-Smi-67-2}{article}{
   author={Finn, Robert},
   author={Smith, Donald R.},
   title={On the stationary solutions of the Navier--Stokes equations in two
   dimensions},
   journal={Arch. Rational Mech. Anal.},
   volume={25},
   date={1967},
   pages={26--39},
}
\bib{Fujii-pre}{article}{
   author={Fujii, Mikihro},
   title={Counter examples for bilinear estimates related to the two-dimensional stationary Navier--Stokes equations},
   journal={arXiv:2304.08355},
}
\bib{Fuj-61}{article}{
   author={Fujita, Hiroshi},
   title={On the existence and regularity of the steady-state solutions of
   the Navier--Stokes theorem},
   journal={J. Fac. Sci. Univ. Tokyo Sect. I},
   volume={9},
   date={1961},
   pages={59--102 (1961)},
}
\bib{Fuj-Kat-64}{article}{
   author={Fujita, Hiroshi},
   author={Kato, Tosio},
   title={On the Navier--Stokes initial value problem. I},
   journal={Arch. Rational Mech. Anal.},
   volume={16},
   date={1964},
   pages={269--315},
}
\bib{Gal-11}{book}{
   author={Galdi, G. P.},
   title={An introduction to the mathematical theory of the Navier-Stokes
   equations},
   series={Springer Monographs in Mathematics},
   edition={2},
   note={Steady-state problems},
   publisher={Springer, New York},
   date={2011},
   pages={xiv+1018},
}
\bib{Gal-Soh-95}{article}{
   author={Galdi, Giovanni P.},
   author={Sohr, Hermann},
   title={On the asymptotic structure of plane steady flow of a viscous
   fluid in exterior domains},
   journal={Arch. Rational Mech. Anal.},
   volume={131},
   date={1995},
   pages={101--119},
}
\bib{Gal-Yam-15}{article}{
   author={Galdi, Giovanni P.},
   author={Yamazaki, Masao},
   title={Stability of stationary solutions of two-dimensional Navier--Stokes
   exterior problem},
   conference={
      title={The proceedings on Mathematical Fluid Dynamics and Nonlinear
      Wave},
   },
   book={
      series={GAKUTO Internat. Ser. Math. Sci. Appl.},
      volume={37},
      publisher={Gakkotosho, Tokyo},
   },
   date={2015},
}
\bib{Gei-Hie-Ngu-16}{article}{
   author={Geissert, Matthias},
   author={Hieber, Matthias},
   author={Nguyen, Thieu Huy},
   title={A general approach to time periodic incompressible viscous fluid
   flow problems},
   journal={Arch. Ration. Mech. Anal.},
   volume={220},
   date={2016},
   pages={1095--1118},
}
\bib{Hey-70}{article}{
   author={Heywood, John G.},
   title={On stationary solutions of the Navier--Stokes equations as limits
   of nonstationary solutions},
   journal={Arch. Rational Mech. Anal.},
   volume={37},
   date={1970},
   pages={48--60},
}
\bib{Hil-Wit-13}{article}{
   author={Hillairet, Matthieu},
   author={Wittwer, Peter},
   title={On the existence of solutions to the planar exterior Navier--Stokes
   system},
   journal={J. Differential Equations},
   volume={255},
   date={2013},
   pages={2996--3019},
}
\bib{Kan-Koz-Shi-19}{article}{
   author={Kaneko, Kenta},
   author={Kozono, Hideo},
   author={Shimizu, Senjo},
   title={Stationary solution to the Navier--Stokes equations in the scaling
   invariant Besov space and its regularity},
   journal={Indiana Univ. Math. J.},
   volume={68},
   date={2019},
   pages={857--880},
}
\bib{Koz-Nak-96}{article}{
   author={Kozono, Hideo},
   author={Nakao, Mitsuhiro},
   title={Periodic solutions of the Navier-Stokes equations in unbounded
   domains},
   journal={Tohoku Math. J. (2)},
   volume={48},
   date={1996},
   pages={33--50},
}
\bib{Koz-Shi-23}{article}{
   author={Kozono, Hideo},
   author={Shimizu, Senjo},
   title={Stability of stationary solutions to the Navier–Stokes equations in the Besov space},
   journal={Math. Nachr.},
   volume={296},
   date={2023},
   pages={1964--1982},
}
\bib{Koz-Yam-95-PJA}{article}{
   author={Kozono, Hideo},
   author={Yamazaki, Masao},
   title={Small stable stationary solutions in Morrey spaces of the
   Navier--Stokes equations},
   journal={Proc. Japan Acad. Ser. A Math. Sci.},
   volume={71},
   date={1995},
   pages={199--201},
}
\bib{Koz-Yam-95-IUMJ}{article}{
   author={Kozono, Hideo},
   author={Yamazaki, Masao},
   title={The stability of small stationary solutions in Morrey spaces of
   the Navier--Stokes equations},
   journal={Indiana Univ. Math. J.},
   volume={44},
   date={1995},
   pages={1307--1336},
}
\bib{Lad-59}{article}{
   author={Lady\v{z}enskaya, O. A.},
   title={Investigation of the Navier--Stokes equations for stationary motion
   of an incompressible fluid},
   language={Russian},
   journal={Uspehi Mat. Nauk},
   volume={14},
   date={1959},
   pages={75--97},
}
\bib{Ler-33}{book}{
   author={Leray, Jean},
   title={\'{E}tude de diverses \'{e}quations int\'{e}grales non lin\'{e}aires et de
   quelques probl\`emes que pose l'hydrodynamique},
   language={French},
   publisher={NUMDAM, [place of publication not identified]},
   date={1933},
   pages={82},
}
\bib{Li-Yu-Zhu}{article}{
	author={Li, Jinlu},
    author={Yu, Yanghai},
    author={Zhu, Weipeng},
	title={Ill-posedness for the stationary Navier--Stokes equations in critical Besov spaces},
	journal={arXiv:2204.08295v3},
}
\bib{Mae-17}{article}{
   author={Maekawa, Yasunori},
   title={On stability of steady circular flows in a two-dimensional
   exterior disk},
   journal={Arch. Ration. Mech. Anal.},
   volume={225},
   date={2017},
   pages={287--374},
}
\bib{Mae-Tsu-23}{article}{
   author={Maekawa, Yasunori},
   author={Tsurumi, Hiroyuki},
   title={Existence of the stationary Navier--Stokes flow in $\mathbb{R}^2$
   around a radial flow},
   journal={J. Differential Equations},
   volume={350},
   date={2023},
   pages={202--227},
}
\bib{Pli-Rus-12}{article}{
   author={Pileckas, Konstantin},
   author={Russo, Remigio},
   title={On the existence of vanishing at infinity symmetric solutions to
   the plane stationary exterior Navier--Stokes problem},
   journal={Math. Ann.},
   volume={352},
   date={2012},
   pages={643--658},
}
\bib{Saw-18}{book}{
   author={Sawano, Yoshihiro},
   title={Theory of Besov spaces},
   series={Developments in Mathematics},
   volume={56},
   publisher={Springer, Singapore},
   date={2018},
}
\bib{Tsu-19-JMAA}{article}{
   author={Tsurumi, Hiroyuki},
   title={Ill-posedness of the stationary Navier--Stokes equations in Besov
   spaces},
   journal={J. Math. Anal. Appl.},
   volume={475},
   date={2019},
   pages={1732--1743},
}
\bib{Tsu-19-N}{article}{
   author={Tsurumi, Hiroyuki},
   title={Well-posedness and ill-posedness of the stationary Navier--Stokes
   equations in toroidal Besov spaces},
   journal={Nonlinearity},
   volume={32},
   date={2019},
   number={10},
   issn={0951-7715},
}
\bib{Tsu-19-DIE}{article}{
   author={Tsurumi, Hiroyuki},
   title={The stationary Navier--Stokes equations in the scaling invariant
   Triebel--Lizorkin spaces},
   journal={Differential Integral Equations},
   volume={32},
   date={2019},
   pages={323--336},
}
\bib{Tsu-19-ARMA}{article}{
   author={Tsurumi, Hiroyuki},
   title={Well-posedness and ill-posedness problems of the stationary
   Navier--Stokes equations in scaling invariant Besov spaces},
   journal={Arch. Ration. Mech. Anal.},
   volume={234},
   date={2019},
   pages={911--923},
}
\bib{Tsu-23}{article}{
   author={Tsurumi, Hiroyuki},
   title={The two-dimensional stationary Navier–Stokes equations in toroidal Besov spaces},
   journal={Math. Nachr.},
   volume={00},
   date={2023},
   pages={1--18},
}
\bib{Yam-09}{article}{
   author={Yamazaki, Masao},
   title={The stationary Navier--Stokes equations on the whole plane with
   external force with antisymmetry},
   journal={Ann. Univ. Ferrara Sez. VII Sci. Mat.},
   volume={55},
   date={2009},
   pages={407--423},
}
\bib{Yam-16}{article}{
   author={Yamazaki, Masao},
   title={Two-dimensional stationary Navier--Stokes equations with 4-cyclic
   symmetry},
   journal={Math. Nachr.},
   volume={289},
   date={2016},
   pages={2281--2311},
}
\bib{Yon-10}{article}{
   author={Yoneda, Tsuyoshi},
   title={Ill-posedness of the 3D-Navier--Stokes equations in a generalized
   Besov space near $\rm BMO^{-1}$},
   journal={J. Funct. Anal.},
   volume={258},
   date={2010},
   pages={3376--3387},
}
\end{biblist}
\end{bibdiv}

\end{document}